\theoremstyle{plain}
\newtheorem{theorem}{Theorem}[section]
\newtheorem{corollary}[theorem]{Corollary}
\newtheorem{lemma}[theorem]{Lemma}
\newtheorem{proposition}[theorem]{Proposition}
\theoremstyle{definition}
\theoremstyle{remark}
\newtheorem{example}[theorem]{Example}
\newtheorem{remark}[theorem]{Remark}
\newtheorem*{claim}{Claim}
\numberwithin{equation}{section}
\def\sgn{\operatorname{sgn}}
\def\Sym{\mathfrak{S}}
\def\Pf{\operatorname{Pf}}
\def\Fam{\mathfrak{F}}
\def\Mat{\mathcal{M}}
\def\wt{\omega}
\def\Det{\operatorname{\det}}
\def\odd#1{\overline{#1}\,}
\def\even#1{\underline{#1}\,}
\begin{document}
\title{Determinant structure for $\tau$-function of 
holonomic deformation of linear differential equations}

\author[1]{Masao Ishikawa}
\author[2]{Toshiyuki Mano}
\author[3]{Teruhisa Tsuda}
\affil[1]{\small Department of Mathematics,
Okayama University,
Okayama 700-8530, Japan.}
\affil[2]{\small Department of Mathematical Sciences,
University of the Ryukyus,
Okinawa 903-0213, Japan.}
\affil[3]{\small Department of Economics,
Hitotsubashi University, 
Tokyo 186-8601, Japan.}
\affil[3]{E-mail: tudateru@econ.hit-u.ac.jp}
\date{October 17, 2017; Revised June 20, 2018}

\normalsize

\maketitle
\begin{abstract}
 In our previous works \cite{MT1, MT2},
 a relationship between Hermite's two approximation problems and 
 Schlesinger transformations 
 of linear differential equations
 has been clarified. 
 In this paper,
 we study $\tau$-functions 
 associated with 
 holonomic deformations 
 of linear differential equations
 by using
 Hermite's two approximation problems.
 As a result, we present a determinant formula for 
 the ratio of $\tau$-functions ($\tau$-quotient).
\end{abstract}

\renewcommand{\thefootnote}{\fnsymbol{footnote}}
\footnotetext{{\it 2010 Mathematics Subject Classification} 
34M55, 
34M56, 
41A21. 
} 


\section{Introduction} \label{sec:intro}
There are many results concerning determinant formulas for solutions to the Painlev\'e equations;
see \cite{JKM1,JKM2,KMO,Mas,Tsu1, Tsu2} and references therein.
After pioneering works by D. Chudnovsky and G. Chudnovsky \cite{CC1,CC2},
an underlying relationship between the theory of 
rational approximation for functions 
and
the Painlev\'e equations has been 
clarified 
by several authors
 \cite{Mag,Man,MT1,MT2, Yam}.
This relationship provides a natural explanation 
for the determinant structure of solutions to the Painlev\'e equations.

Among them, the second and third authors of this paper 
studied 
the relationship between two approximation problems by Hermite
(i.e. the Hermite--Pad\'e approximation and the simultaneous Pad\'e approximation)
and isomonodromic deformations of Fuchsian linear differential equations.
They constructed a class of Schlesinger transformations for Fuchsian linear differential equations
using Hermite's two approximation problems
and a duality between them.
As an application, they obtained particular solutions written in terms of iterated hypergeometric integrals 
to the higher-dimensional Hamiltonian systems
of Painlev\'e type 
(that were 
introduced in \cite{Tsu3}).
For details refer to \cite{MT1,MT2}.

In the present paper, 
we study using Hermite's two approximation problems the determinant structure for $\tau$-functions of holonomic deformations
of linear differential equations which 
have regular or irregular singularities of arbitrary Poincar\'e rank.
The main theorem (Theorem \ref{thm:mainthm}) is stated as follows:
fix an integer $L \geq 2$ and 
consider a system of linear differential equations of rank $L$
\begin{equation} \label{eq:intro1}
 \frac{dY}{dx}=\left(
 \sum_{\mu=1}^N\sum_{j=0}^{r_{\mu}}A_{\mu,-j}(x-a_{\mu})^{-j-1}-\sum_{j=1}^{r_{\infty}}A_{\infty,-j}x^{j-1}
 \right)Y,
\end{equation}
where $A_{\mu,-j}$ and $A_{\infty,-j}$ are $L\times L$ matrices independent of $x$.
Let $\tau _0$ be {\it Jimbo--Miwa--Ueno's $\tau$-function} 
(see (\ref{eq:defofomega}) and (\ref{eq:defoftau}))
associated with a holonomic deformation of (\ref{eq:intro1}).
We apply the Schlesinger transformation to (\ref{eq:intro1}) that shifts the 
characteristic exponents at $x=\infty$ 
by 
\[
    {\boldsymbol n}=((L-1)n,-n,\dots,-n)
  \in {\mathbb Z}^L
\]
for a positive integer $n$.
Let $\tau_n$ denote the $\tau$-function associated with the resulting equation.
Then the ratio $\tau_n/\tau_0$ 
($\tau$-quotient) admits a representation in terms of an $(L-1)n\times (L-1)n$ 
block Toeplitz determinant:
\begin{equation}
\label{eq:mainresult}
 \frac{ \tau_n}{\tau_0}=\mbox{const.} D_n,
 \quad
  D_n=\begin{vmatrix} B^{1}_n((L-1)n,n) & \cdots & B^{L-1}_n((L-1)n,n) \end{vmatrix}
\end{equation}
with $B_m^{i}(k,l)$ being a $k\times l$ rectangular Toeplitz matrix 
(see (\ref{eq:Toeplitz}))
whose entries are 
specified by
the asymptotic expansion 
of a fundamental system of solutions to (\ref{eq:intro1})
around $x=\infty$.
It should be noted that
our result is valid for general solutions not only for particular solutions 
such as rational solutions or Riccati solutions.

This paper is 
organized as follows.
In Section~\ref{sec:HPandSP},
we review Hermite's two approximation problems and a certain duality between them.
This duality due to Mahler \cite{Mah}
will be a key point for the construction of Schlesinger transformations in a later section.
We remark that the normalization in this paper is slightly different from that in 
the previous ones \cite{MT1, MT2}.
Therefore, 
we formulate the two approximation problems in a form suitable to
the present case.
In Section~\ref{sec:detPade}, we give determinant representations for the approximation polynomials
and the remainder of the approximation problems.
In our method, these representations turn out to be the nature of the determinant structure of the $\tau$-quotient.
In Section~\ref{sec:holonomic}, we briefly review the theory of holonomic deformation of a linear differential equation 
following \cite{Jimbo-Miwa, Jimbo-Miwa-Ueno}. 
In Section~\ref{sec:Stransformation}, we construct the Schlesinger transformations of linear differential equations
by applying the approximation problems. 
Section~\ref{sec:dettau} is the main part of this paper.
We present the determinant formula for the $\tau$-quotient (see (\ref{eq:mainresult}) or Theorem~\ref{thm:mainthm})
based on the coincidence between 
the Schlesinger transformations and the approximation problems.
A certain determinant identity (see (\ref{eq:key})) plays a crucial role in the proof. 
Section~\ref{sec:particularsol} is devoted to an application of our result.
We demonstrate how to construct particular solutions to the 
holonomic deformation equations such as the Painlev\'e equations. 
We then find some inclusion relations among solutions to holonomic deformations
and, typically, obtain a natural understanding of the determinant formulas for hypergeometric solutions to holonomic deformations.
In Appendix \ref{secA:proofofdet}, we give a proof of the determinant identity 
applied in the proof of Theorem~\ref{thm:mainthm}.
Though this determinant identity can be proved directly,
we will prove its Pfaffian analogue in a general setting 
and then reduce it to the determinant case
in order to simplify the proof and to enjoy better perspectives.

\paragraph{\it Acknowledgement.}
This work was supported 
by a grant-in-aid from the Japan Society for the Promotion of Science
(Grant Numbers 16K05068, 17K05270, 17K05335,
25800082 and 25870234).

\section{Hermite--Pad\'e approximation and simultaneous Pad\'e approximation} 

\label{sec:HPandSP}

In this section, we review Hermite's two approximation problems in a suitable form,
which will be utilized to 
construct Schlesinger transformations for linear
differential equations in a later section.

Let $L$ be an integer larger than one.
Given a set of $L$ formal power series 
\[
 f_0(w), f_1(w), \dots , f_{L-1}(w)\in {\mathbb C}[\![w]\!]
\]
with the conditions
\begin{equation}  \label{eq:fcond}
   f_0(0)=1,\quad f_i(0)=0 \quad (i \neq 0)
\end{equation}
the {\it Hermite--Pad\'e approximation} is formulated as follows:
find 
polynomials
\[
  Q^{(i)}_j(w)\in {\mathbb C}[w]\quad
  (0\leq i,j \leq L-1)
\]
such that
\begin{align}
 &\deg Q^{(i)}_j(w)\leq n-1+\delta_{i,j},  
 \label{eq:degree} 
 \\
 &Q^{(i)}_i(w)f_i(w)+\sum_{j\neq i}wQ^{(i)}_j(w)f_j(w)=w^{Ln}(\delta_{i,0}+O(w)), 
  \label{eq:HPkinji}
 \\
  &Q^{(i)}_i(0)=1 \quad (i \neq 0). 
  \label{eq:seikika} 
\end{align}
There exists a unique set of polynomials $\{Q^{(i)}_j(w)\}$
under a certain generic condition on the coefficients of $f_i(w)$.
The precise condition will be later stated in terms of
non-vanishing  of some block Toeplitz determinants;
see (\ref{eq:unique}) in Section~\ref{sec:detPade}.

In turn, the {\it simultaneous Pad\'e approximation} is formulated as follows:
find 
polynomials
\[
  P^{(i)}_j(w)\in {\mathbb C}[w]
  \quad
   (0\leq i,j \leq L-1)
\]
such that
\begin{align} 
 &\deg P^{(i)}_j(w)\leq n(L-1)-1+\delta_{i,j}, 
 \label{eq:sP1}
 \\
 &f_0(w)P^{(i)}_j(w)-f_j(w)w^{1-\delta_{i,j}}P^{(i)}_0(w)=O(w^{nL}).
    \label{eq:sP2}
\end{align}
Under the same generic condition as above, for each $i$ the polynomials 
$P^{(i)}_j(w)$ $(0 \leq j \leq L-1)$
are uniquely determined up to simultaneous multiplication by constants.

Interestingly enough, these two approximations are in a dual relation;
cf. \cite{Mah}.

\begin{theorem}[Mahler's duality] \label{th:mahler}
  Let $\{Q^{(i)}_j(w)\}$ and $\{P^{(i)}_j(w)\}$ be the Hermite--Pad\'e approximant and the simultaneous Pad\'e approximant, 
  respectively.
 Define  $L\times L$ matrices
 $Q(w)$ and $P(w)$ by
 \begin{gather*}
  Q(w)=\left(w^{1-\delta_{i,j}}Q^{(i)}_j(w)\right)_{0 \leq i,j \leq L-1}
  \in 
  {{\mathbb C}[w]}^{L\times L},
   \\ 
  P(w)=\left(w^{1-\delta_{i,j}}P^{(i)}_j(w)\right)_{0 \leq i,j \leq L-1}
  \in
  {{\mathbb C}[w]}^{L\times L}.
 \end{gather*}
 Then it holds that
 \[
   Q(w){}^{\rm T} P(w)=w^{nL}\cdot D,
 \]
 where $D$ is a diagonal matrix independent of $w$.
\end{theorem}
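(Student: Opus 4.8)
The plan is to prove $Q(w)\,{}^{\rm T}P(w)=w^{nL}D$ by showing first that the product $M(w):=Q(w)\,{}^{\rm T}P(w)$ is divisible entrywise by $w^{nL}$, and second that the quotient $w^{-nL}M(w)$ is a constant diagonal matrix. I would begin by writing out the $(i,k)$-entry of $M(w)$ explicitly. By the definitions of $Q(w)$ and $P(w)$,
\begin{equation*}
 M(w)_{i,k}=\sum_{j=0}^{L-1} w^{1-\delta_{i,j}}Q^{(i)}_j(w)\,w^{1-\delta_{k,j}}P^{(k)}_j(w).
\end{equation*}
The idea is to evaluate this sum against the power series $f_j(w)$ by using both approximation conditions \eqref{eq:HPkinji} and \eqref{eq:sP2} simultaneously. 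Concretely, from \eqref{eq:sP2} we may substitute $w^{1-\delta_{k,j}}P^{(k)}_j(w)=f_0(w)^{-1}\bigl(f_j(w)/f_0(w)\bigr)^{-1}\cdots$ — more cleanly, \eqref{eq:sP2} says $f_0(w)\,w^{1-\delta_{k,j}}P^{(k)}_j(w)\equiv w\,f_j(w)P^{(k)}_0(w)\pmod{w^{nL+1-\delta_{k,j}}}$ when $j\neq k$, and $f_0P^{(k)}_k\equiv f_k P^{(k)}_0\pmod{w^{nL}}$. So multiplying $M(w)_{i,k}$ by $f_0(w)$ and regrouping, the inner sum becomes, modulo a high power of $w$, proportional to $P^{(k)}_0(w)$ times $\sum_j w^{1-\delta_{i,j}}Q^{(i)}_j(w)f_j(w)$, which by \eqref{eq:HPkinji} equals $w^{Ln}(\delta_{i,0}+O(w))$. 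Tracking the $\delta$-exponents carefully (the diagonal terms $j=i$ and $j=k$ need separate bookkeeping because of the $+\delta$ shifts in the degree bounds and the missing factor of $w$), this should show $f_0(w)M(w)_{i,k}=O(w^{nL})$ and hence $M(w)_{i,k}=O(w^{nL})$ since $f_0(0)=1$.

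Once divisibility by $w^{nL}$ is established, the second step is a degree count. Each $Q^{(i)}_j$ has degree at most $n-1+\delta_{i,j}$ and each $P^{(k)}_j$ has degree at most $n(L-1)-1+\delta_{k,j}$, so the $(i,k)$-entry of $M(w)$, including the prefactors $w^{1-\delta_{i,j}}$ and $w^{1-\delta_{k,j}}$, has degree at most $(1-\delta_{i,j})+(n-1+\delta_{i,j})+(1-\delta_{k,j})+(n(L-1)-1+\delta_{k,j})=nL$. Therefore $w^{-nL}M(w)$ is a matrix with \emph{constant} entries. To see it is diagonal, one observes that for $i\neq k$ the degree bound is in fact never simultaneously sharp: for a fixed summand $j$, the top-degree contribution requires $j=i$ in the $Q$-factor chain and $j=k$ in the $P$-factor chain, which is impossible when $i\neq k$; a slightly more careful argument shows every off-diagonal summand contributes degree at most $nL-1$, so the constant $w^{-nL}M(w)_{i,k}$ vanishes for $i\neq k$. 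Hence $w^{-nL}M(w)=D$ is a constant diagonal matrix, which is exactly the claim.

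The main obstacle I anticipate is the bookkeeping in the first step: the conditions \eqref{eq:HPkinji} and \eqref{eq:sP2} carry asymmetric $w$-shifts ($wQ^{(i)}_j$ for $j\neq i$ versus $Q^{(i)}_i$; $w^{1-\delta_{i,j}}$ inside \eqref{eq:sP2}), and these must be matched against the $w^{1-\delta}$ prefactors in the matrices $Q(w)$ and $P(w)$ so that all the exponents line up to give exactly $w^{nL}$ and not a lower power. One has to treat the four cases $j=i=k$, $j=i\neq k$, $j=k\neq i$, and $j\notin\{i,k\}$ separately, and verify that in each the combination of the Hermite--Pad\'e relation and the simultaneous Pad\'e relation yields a remainder of the correct order; I would also need to confirm that $P^{(k)}_0(w)$ is a unit times a polynomial so that dividing through by it (or rather, factoring it out before applying \eqref{eq:HPkinji}) is legitimate modulo the relevant power of $w$. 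A cleaner route that avoids division is to work formally in $\mathbb{C}[w]/(w^{nL+1})$ and use the relations as congruences throughout, multiplying out $Q(w)\,{}^{\rm T}P(w)\cdot\mathrm{diag}(f_0,\dots,f_0)$ and recognizing the Hermite--Pad\'e combination as a factor. Establishing the diagonality of $D$ — i.e. that the off-diagonal constants vanish — is comparatively routine once the degree bound and the divisibility are both in hand, since divisibility by $w^{nL}$ plus degree $\le nL$ forces each entry to be a constant multiple of $w^{nL}$, and the off-diagonal multiplicities turn out to be zero by the same exponent analysis.
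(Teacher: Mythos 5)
The paper's own ``proof'' of this theorem is a one-line citation to Theorem 1.3 of [MT2], so there is no detailed in-paper argument to compare yours against; judged on its own terms, however, your sketch has a genuine gap in the diagonality step. You claim that for $i\neq k$ each off-diagonal summand has degree at most $nL-1$ because ``the top-degree contribution requires $j=i$ in the $Q$-factor chain and $j=k$ in the $P$-factor chain.'' This is false. The degree of the $j$-th summand of $M(w)_{i,k}$ is bounded by $(1-\delta_{i,j})+(n-1+\delta_{i,j})+(1-\delta_{k,j})+(n(L-1)-1+\delta_{k,j})=nL$, and the Kronecker deltas cancel identically: the prefactor $w^{1-\delta_{i,j}}$ exactly compensates the $+\delta_{i,j}$ in the degree bound (\ref{eq:degree}), and likewise for $P$. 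Hence the bound $nL$ is attainable for \emph{every} $j$ whenever $Q^{(i)}_j$ and $P^{(k)}_j$ have full degree, irrespective of whether $j=i$ or $j=k$, and a termwise exponent count cannot make the off-diagonal leading coefficients vanish. What is actually needed is cancellation across the sum: one must show $M(w)_{i,k}=O(w^{nL+1})$ for $i\neq k$ directly from the approximation conditions, after which ``order of vanishing $\geq nL+1$ plus degree $\leq nL$'' forces $M(w)_{i,k}=0$. Your proposal never establishes this extra order of vanishing.

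The first step is also not actually carried out, and the obstacle you yourself flag is real rather than cosmetic. Substituting $f_0P^{(k)}_j=f_jw^{1-\delta_{k,j}}P^{(k)}_0+O(w^{nL})$ from (\ref{eq:sP2}) into $f_0M(w)_{i,k}=\sum_j w^{1-\delta_{i,j}}Q^{(i)}_j\,w^{1-\delta_{k,j}}\bigl(f_0P^{(k)}_j\bigr)$ yields the main term $P^{(k)}_0\sum_j w^{1-\delta_{i,j}}Q^{(i)}_j f_j\,w^{2(1-\delta_{k,j})}$, and the $j$-dependent factor $w^{2(1-\delta_{k,j})}$ blocks you from recognizing the Hermite--Pad\'e combination $\sum_j w^{1-\delta_{i,j}}Q^{(i)}_jf_j=w^{Ln}(\delta_{i,0}+O(w))$ of (\ref{eq:HPkinji}): the summand $j=k$ carries no compensating power of $w$ and is not visibly $O(w^{nL})$ (for $i=k\neq 0$ it is merely $O(w)$). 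So the conclusion of your Step 1 does not follow from the manipulation as described. A correct argument must line up the $w$-weights so that the $k$-th column of ${}^{\rm T}P(w)$ is congruent, modulo $w^{nL}$ (and to one order higher in the positions relevant to $i\neq k$), to a single scalar multiple of the vector ${}^{\rm T}(f_0,\dots,f_{L-1})$ that the rows of $Q(w)$ annihilate to order $w^{Ln}$; making that weight-matching precise is the actual content of the duality and is exactly the part your sketch leaves unresolved.
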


\begin{proof}
 This can be proved in a procedure similar to Theorem 1.3 in \cite{MT2}.
\end{proof}

We can choose the normalization of $P^{(i)}_j(w)$ such that $D=I$ (the identity matrix).
We will henceforth adopt this normalization.

\section{Determinant representation of Hermite--Pad\'e approximants} \label{sec:detPade}

In this section, we give a concrete description of the solution to the Hermite--Pad\'e approximation problem (\ref{eq:degree})--(\ref{eq:seikika})
 in Section~\ref{sec:HPandSP}.

Without loss of generality, we may
assume $f_0(w)=1$ 
since the approximation conditions remain unchanged
 if we replace 
$\{f_0,f_1,\dots,f_{L-1}\}$ by $\{1,f_1/f_0,\dots,f_{L-1}/f_0\}.$
Therefore, we assume $f_0(w)=1$ in the sequel.
Let us write the power series as
\[
f_i(w)=\displaystyle\sum_{k=0}^{\infty}b^i_kw^k
\quad (0 \leq  i \leq L-1).
\]
Then we see that $b^0_0=1$ and $b^0_k=0$ $(k\neq 0)$ from $f_0(w)=1$
and that $b^i_0=0$ $(i \neq 0)$ from (\ref{eq:fcond}).
Besides
we set $b^i_k=0$ $(k<0)$ for notational convenience.
Let us write the polynomials $Q^{(i)}_j(w)$ as
\[
 Q^{(i)}_j(w)=c^i_{j,0}+c^i_{j,1}w+\cdots +c^i_{j,n-1+\delta_{i,j}}w^{n-1+\delta_{i,j}}
 \quad (0 \leq i,j \leq L-1)
\]
with $c^i_{j,k}$ being the coefficient of $w^k$.
The left-hand side of (\ref{eq:HPkinji})
reads as
\begin{align*}
 Q^{(i)}_if_i+\sum_{j\neq i} w Q^{(i)}_j f_j
 =
 \sum_{k=0}^{\infty}
  \left(
  \sum_{l=0}^nb^i_{k-l}c^i_{i,l}+\sum_{j\neq i}\sum_{l=0}^{n-1}b^j_{k-1-l}c^i_{j,l}
  \right)w^k.
\end{align*}
Hence the approximation condition (\ref{eq:HPkinji})
can be interpreted as a system of linear equations
for the unknowns $c^i_{j,k}$:
\begin{align}
 &\sum_{l=0}^nb^0_{k-l}c^0_{0,l}+\sum_{j\neq 0}\sum_{l=0}^{n-1}b^j_{k-1-l}c^0_{j,l}=0
 \quad
 (0 \leq k \leq Ln-1),
  \label{eq:senkeikinji01} \\
 &\sum_{l=0}^nb^0_{Ln-l}c^0_{0,l}+\sum_{j\neq 0}\sum_{l=0}^{n-1}b^j_{Ln-1-l}c^0_{j,l}=1 \label{eq:senkeikinji02}
\end{align}
for $i=0$;
and 
\begin{equation}
 \sum_{l=0}^nb^i_{k-l}c^i_{i,l}+\sum_{j\neq i}\sum_{l=0}^{n-1}b^j_{k-1-l}c^i_{j,l}=0
 \quad 
 (1 \leq k \leq Ln)
 \label{eq:senkeikinjii}
\end{equation}
for $i \neq 0$.

Let us introduce the column vectors 
\[
 {\boldsymbol c}^i={}^{\rm T}
 \left({\boldsymbol c}^i_0,{\boldsymbol c}^i_1,\dots ,{\boldsymbol c}^i_{L-1}\right)
 \in {\mathbb C}^{Ln+1} \quad
  (0 \leq  i \leq L-1),
\]
where
\[
 {\boldsymbol c}^i_j=
 \left(c^i_{j,0},\dots,c^i_{j,n-1+\delta_{i,j}}
 \right),
\]
and introduce the $k\times l$ rectangular 
Toeplitz matrix
\begin{equation} \label{eq:Toeplitz}
 B_m^{i}(k,l)=\left(b_{m+\alpha-\beta}^{i}\right)_{
 \begin{subarray}{l}
 1\leq \alpha\leq k
 \\
 1\leq \beta\leq l
 \end{subarray}}
 =\begin{pmatrix} 
 b_m^{i} & b_{m-1}^{i} & \cdots & b_{m-l+1}^{i} \\
 b_{m+1}^{i} & b_{m}^{i} & \cdots & b_{m-l+2}^{i} \\
 \vdots & \vdots & \ddots & \vdots \\
 b_{m+k-1}^{i} & b_{m+k-2}^{i} & \cdots & b_{m+k-l}^{i} 
 \end{pmatrix}               
\end{equation}
for the sequence $\{b^i_{j}\}$.
Then 
the linear equations (\ref{eq:senkeikinji01}) and (\ref{eq:senkeikinji02}) are summarized as a matrix form
\begin{equation} \label{eq:renritsu0}
 \mathcal{B}^0{\boldsymbol c}^0={}^{\rm T}(0,\ldots,0,1),
\end{equation}
where $\mathcal{B}^0$ is 
a square matrix of order $Ln+1$
defined by
\begin{equation*}
 \mathcal{B}^0=\begin{pmatrix} B^0_0(Ln+1,n+1) & B^1_{-1}(Ln+1,n) & \cdots & B^{L-1}_{-1}(Ln+1,n) \end{pmatrix}.
\end{equation*}
Similarly,
 (\ref{eq:senkeikinjii}) can be rewritten into
\begin{equation} \label{eq:renritsui}
 \mathcal{B}^i{\boldsymbol c}^i=
  {\boldsymbol 0}=
 {}^{\rm T}(0,\ldots,0)
 \quad (i \neq 0)
\end{equation}
where
$\mathcal{B}^i$ ($i \neq 0$)
are $Ln \times (Ln+1)$  matrices defined by
\begin{equation*}
 \mathcal{B}^i=\begin{pmatrix} B^0_0(Ln,n) & \cdots & B^{i-1}_0(Ln,n) & B^i_1(Ln,n+1) & B^{i+1}_0(Ln,n) & \cdots & B^{L-1}_0(Ln,n) \end{pmatrix}.
\end{equation*}
Solving (\ref{eq:renritsu0}) and (\ref{eq:renritsui}) by Cramer's rule, we have the determinant expressions of the approximants
$Q^{(i)}_j(w)$:
\begin{align*}
 Q^{(0)}_0(w)
 &=
 \frac{1}{ \left| \mathcal{B}^0\right| }
 \begin{vmatrix}  
B^0_0(Ln,n+1) & B^1_{-1}(Ln,n) & \cdots & B^{L-1}_{-1}(Ln,n) \\
                 1,w,\dots,w^n & {\boldsymbol 0} & \cdots & {\boldsymbol 0} \end{vmatrix}, \\
 Q^{(0)}_j(w)
 &=
 \frac{1}{ \left| \mathcal{B}^0\right| }
 \begin{vmatrix}  B^0_0(Ln,n+1) & \cdots & B^j_{-1}(Ln,n) & \cdots & B^{L-1}_{-1}(Ln,n) \\
                 {\boldsymbol 0} & \cdots & 1,w,\dots ,w^{n-1} & \cdots & {\boldsymbol 0} \end{vmatrix} \quad 
(j \neq 0)
\end{align*}
for $i=0$; and 
\begin{equation}
\label{eq:Qij}
Q^{(i)}_j(w)
 =\frac{(-1)^{(L+i)n}}{ \left| \mathcal{B}  \right| }
 \left|
\begin{array}{ccc}
&{\mathcal B}^{(i)}&
\\
{\boldsymbol 0} &
\underbrace{1, w, \ldots, w^{n-1+\delta_{i,j}}}_{\text{\rm $j$th block}}
 &{\boldsymbol 0} 
\end{array}
\right|
\end{equation}
for $i \neq 0$,
where $\mathcal{B}$ is 
a square matrix of order $Ln$ defined by
\[
 \mathcal{B}=\begin{pmatrix} B^0_0(Ln,n) & B^{1}_0(Ln,n) & \cdots & B^{L-1}_0(Ln,n) \end{pmatrix}.
\]
In the latter case we have used the normalization
 (\ref{eq:seikika}).
Note that 
\begin{equation}
\label{eq:unique}
\left|\mathcal{B}^0 \right|\neq 0
\quad \text{and} \quad
 \left| \mathcal{B} \right| \neq 0
\end{equation}
are the conditions for the unique existence of $\{Q^{(i)}_j\}$,
which we will impose throughout this paper.

Next, we concern 
\begin{equation} \label{eq:remind}
 \rho^i(w)=Q^{(i)}_if_i+\sum_{j\neq i}w Q_j^{(i)} f_j  \quad
  (0 \leq i \leq L-1)
\end{equation} 
which are the reminders of the Hermite--Pad\'e approximation problem
(\ref{eq:degree})--(\ref{eq:seikika}).
For $i=0$, we have 
\begin{equation*}
 \rho^0(w)=w^{Ln}(1+O(w)).
\end{equation*}
For $i \neq 0$, 
substituting (\ref{eq:Qij}) shows that
\begin{align*}
\rho^i(w)
&=
\frac{(-1)^{(L+i)n}}{|{\mathcal B} |}
\begin{vmatrix} B^0_0(Ln,n) & \cdots & B^i_1(Ln,n+1) & \cdots & B^{L-1}_0(Ln,n) \\
                   wf_0,\dots,w^nf_0    & \cdots & f_i,wf_i,\dots,w^nf_i & \cdots & wf_{L-1},\dots,w^nf_{L-1} \end{vmatrix}                    
\\
&=O(w^{nL+1}).
\end{align*}
Introduce the {\it block Toeplitz determinants}
\begin{align}
 D_n&=|{\mathcal B}|
 =\begin{vmatrix} B^{0}_0(Ln,n) & \cdots & B^{L-1}_0(Ln,n) \end{vmatrix} 
 \nonumber
 \\
 &=
 \begin{vmatrix} B^{1}_n((L-1)n,n) & \cdots & B^{L-1}_n((L-1)n,n) \end{vmatrix} 
 \label{eq:defDn}
\end{align}
and
\begin{align}
  E_n^{i,j}
  &=
  \begin{vmatrix} B^{0}_0(Ln,n) & \cdots & B^{i}_1(Ln,n+1) & \cdots & B^{L-1}_0(Ln,n) \\
                  B^{0}_{Ln+j-1}(1,n) & \cdots & B^{i}_{Ln+j}(1,n+1) & \cdots & B^{L-1}_{Ln+j-1}(1,n)    
    \end{vmatrix}
\nonumber  
\\
    &=
    \begin{vmatrix} B^{1}_n((L-1)n,n) & \cdots & B^{i}_{n+1}((L-1)n,n+1) & \cdots & B^{L-1}_n((L-1)n,n) \\
                  B^{1}_{Ln+j-1}(1,n) & \cdots & B^{i}_{Ln+j}(1,n+1) & \cdots & B^{L-1}_{Ln+j-1}(1,n)    
    \end{vmatrix},
\label{eq:defE}
\end{align}
where we have used
$b^0_0=1$ and $b^0_k=0$ $(k\neq 0)$.
Thus,
 the coefficients of 
 $\rho^i(w)=w^{Ln}\sum_{j=1}^{\infty}\rho^i_jw^{j}$ are written as
\begin{equation} \label{eq:reprhoij}
 \rho^i_j=
 (-1)^{(L+i)n}
 \frac{E_n^{i,j}}{D_n}.
\end{equation}

\section{Holonomic deformation of a system of linear differential equations} \label{sec:holonomic}
In this section, we briefly review the theory of holonomic deformations of linear differential equations
following \cite{Jimbo-Miwa, Jimbo-Miwa-Ueno}.

We consider 
an $L\times L$ system of linear differential equations
which has regular or  irregular singularities at $x=a_1,\dots,a_N,a_{\infty}=\infty$ on $\mathbb{P}^1$ 
with Poincar\'e rank  $r_{\mu}$ $(\mu=1,\dots,N,\infty)$, respectively:
\begin{equation} \label{lineardiffeq}
 \frac{dY}{dx}=A(x)Y,
\end{equation}
where
\begin{equation*} 
 A(x)=\sum_{\mu=1}^N\sum_{j=0}^{r_{\mu}}A_{\mu,-j}(x-a_{\mu})^{-j-1}-\sum_{j=1}^{r_{\infty}}A_{\infty,-j}x^{j-1}
 \in {\mathbb C}(x)^{L\times L}
\end{equation*}
and $A_{\mu,-j}$ and $A_{\infty,-j}$ are constant matrices independent of $x$.
We assume that 
$A_{\mu,-r_{\mu}}$ $(\mu=1,\dots,N,\infty)$
is diagonalizable as
\begin{equation*}
 A_{\mu,-r_{\mu}}=G^{(\mu)}T^{(\mu)}_{-r_{\mu}}G^{(\mu)-1},
\end{equation*}
where the diagonal matrix
$T^{(\mu)}_{-r_{\mu}}=(t^{(\mu)}_{-r_{\mu}, \alpha} \delta_{\alpha,\beta})_{0 \leq \alpha,\beta \leq L-1}$
satisfies
\begin{align*}
t^{(\mu)}_{-r_{\mu},\alpha}
\neq t^{(\mu)}_{-r_{\mu},\beta}
\quad
 &\text{if} \quad\alpha\neq\beta, \quad r_{\mu}\geq 1, \\
 t^{(\mu)}_{0,\alpha}
  \not\equiv t^{(\mu)}_{0,\beta}
 \mod \mathbb{Z}
 \quad
  &\text{if} \quad 
  \alpha\neq\beta,  \quad  r_{\mu}=0.
\end{align*}
Let us introduce the diagonal matrices
\begin{equation*}
 T^{(\mu)}(x)=
 \left(e^{(\mu)}_{\alpha}(x)\,\delta_{\alpha,\beta}
 \right)_{0 \leq \alpha ,\beta \leq L-1}
\end{equation*}
for $ \mu=1,\ldots,N,\infty$
with
\[
 e^{(\mu)}_{\alpha}(x)=\sum_{j=1}^{r_{\mu}}t^{(\mu)}_{-j, \alpha}
 \frac{ {z_\mu }^{-j}}{-j}+t^{(\mu)}_{0,\alpha}\log z_{\mu},
\quad
 z_{\mu}=
 \left\{\begin{array}{ll} x-a_{\mu} &(1 \leq \mu \leq N) \\ 
x^{-1} & (\mu=\infty). 
\end{array} \right.
\]
Then, 
we can take sectors 
$\mathscr{S}^{(\mu)}_k$ $(1 \leq k \leq 2r_{\mu})$ 
centered at $a_{\mu}$
and
there exists a unique fundamental system of solutions 
to (\ref{lineardiffeq}) having
the asymptotic expansion of the form
\[
Y(x)
\simeq G^{(\mu)}\hat{Y}^{(\mu)}(x)e^{T^{(\mu)}(x)},
\quad
 \hat{Y}^{(\mu)}(x)
 =I+Y^{(\mu)}_1z_{\mu}+Y^{(\mu)}_2{z_{\mu}}^2+\cdots
\]
in each $\mathscr{S}^{(\mu)}_k$.
Note that $\hat{Y}^{(\mu)}(x)$ are  in general divergent
and that even around the same point $z=a_\mu$
these power series in two different sectors may differ by a left multiplication of some constant matrix  
({\it Stokes phenomena}).
Without loss of generality,
 we henceforth assume $G^{(\infty)}=I$.

If we start with the  fundamental system of solutions 
normalized by the asymptotic expansion 
\begin{equation}
\label{eq:fundsol}
Y(x)
\simeq
\hat{Y}^{(\infty)}(x)e^{T^{(\infty)}(x)},
\quad
 \hat{Y}^{(\infty)}(x)
 =I+Y^{(\infty)}_1z_{\infty}+Y^{(\infty)}_2{z_{\infty}}^2+\cdots
\end{equation}
in the sector $\mathscr{S}^{(\infty)}_1$
around $x=\infty$,
then the same solution behaves 
as
\[
 Y(x)
 \simeq G^{(\mu)}\hat{Y}^{(\mu)}(x)e^{T^{(\mu)}(x)}
 {S^{(\mu)}_{k-1}}^{-1} \cdots  {S^{(\mu)}_1}^{-1} 
 C^{(\mu)}
\]
in a different sector $\mathscr{S}^{(\mu)}_k$,
where
$C^{(\mu)}$ and $S^{(\mu)}_j$ are the invertible constant matrices
called the {\it connection matrix} and  {\it Stokes multiplier}, respectively.

We consider a deformation of (\ref{lineardiffeq}) 
by choosing 
$a_1,\ldots,a_N$ and
$t^{(\mu)}_{-j, \alpha}$
$(\mu=1,\dots,N,\infty$; $1 \leq j \leq r_{\mu}$; $0 \leq \alpha \leq L-1)$
as its independent variables 
such that $T^{(\mu)}_0$, $C^{(\mu)}$ and $S^{(\mu)}_j$ are kept invariant. 
Such a deformation is called a {\it holonomic deformation}.
Let $d$ denote the exterior differentiation with respect to
the deformation parameters $\{a_{\mu},t^{(\mu)}_{-j,\alpha}\}$. 
The fundamental system of solutions $Y(x)$ 
specified by (\ref{eq:fundsol}) is subject to the holonomic deformation
if and only if it satisfies
\begin{equation} 
\label{defeq}
 dY(x)=\Omega(x)Y(x),
\end{equation}
where $\Omega(x)$ is a matrix-valued $1$-form
given as
\[
 \Omega(x)=\sum_{\mu=1}^{N}B^{(\mu)}(x)da_{\mu}
 +\sum_{\mu=1,\dots,N,\infty}\sum_{j=1}^{r_{\mu}}\sum_{\alpha=0}^{L-1}B^{(\mu)}_{-j,\alpha}(x)dt^{(\mu)}_{-j,\alpha},
\]
 whose coefficients $B^{(\mu)}(x)$ and $B^{(\mu)}_{-j,\alpha}(x)$ are rational functions in $x$.
From the integrability condition of (\ref{lineardiffeq}) and (\ref{defeq}),
we obtain a system of
nonlinear differential equations 
for $A(x)$ and $G^{(\mu)}$:
\begin{equation} \label{eq:deform}
 dA(x)=\frac{\partial \Omega}{\partial x} (x)+[\Omega(x), A(x)], \quad
 dG^{(\mu)}=\Theta^{(\mu)}G^{(\mu)} 
 \quad 
(1 \leq \mu \leq N).
\end{equation}
We remark that $\Omega(x)$ and $\Theta^{(\mu)}$ are computable from $A(x)$ and $ G^{(\mu)}$ by a rational procedure;
see \cite{Jimbo-Miwa-Ueno} for details.
The $1$-form 
\begin{equation}
\label{eq:defofomega}
 \omega=-\sum_{\mu=1,\dots,N,\infty}
 {\rm tr}\, 
 \underset{x=a_\mu}{\rm Res}
\hat{Y}^{(\mu)}(x)^{-1}\frac{\partial \hat{Y}^{(\mu)}}{\partial x}(x)\,
 dT^{(\mu)}(x)
\end{equation}
is closed, i.e.  $d \omega =0$, for any solution to  (\ref{eq:deform}).
Hence we can define
 the {\it $\tau$-function} 
 $\tau=\tau(\{a_{\mu},t^{(\mu)}_{-j,\alpha}\})$
 by 
 \begin{equation}
 \label{eq:defoftau}
 d\log \tau=\omega.
 \end{equation}

\section{Construction of Schlesinger transformations} 
\label{sec:Stransformation}

In this section, we construct the Schlesinger transformation that shifts the 
characteristic exponents at $x=\infty$
of the system of linear differential equations (\ref{lineardiffeq}) as
\[
  {\boldsymbol t}^{(\infty)}_0=
  \left(
  t^{(\infty)}_{0,0},\dots,t^{(\infty)}_{0,L-1}
  \right )\mapsto {\boldsymbol t}^{(\infty)}_0+{\boldsymbol n},
\]
where 
${\boldsymbol n}=((L-1)n,-n,\dots,-n) \in {\mathbb Z}^L$ and 
$n$ is a positive integer.

Write the power series part of 
$Y(x)
\simeq
\hat{Y}^{(\infty)}(x)e^{T^{(\infty)}(x)}$
 (see (\ref{eq:fundsol}))  as
\begin{equation} 
\label{eq:Y}
 \hat{Y}^{(\infty)}(x)=\Phi (w)
 =\left(  \phi_{i,j}(w)\right)_{0 \leq i,j \leq L-1}, \quad
  \phi_{i,j}(w)=\sum_{k=0}^{\infty}a^{i,j}_k
  w^k,
\end{equation}
where
\[
w=z_{\infty}=\frac{1}{x}.
\]
Namely, $\Phi(w)$ is an $L \times L$ matrix whose entries are formal power series in $w$,
and its constant term is the identity matrix, i.e.
$\phi_{i,j}(0)=\delta_{i,j}$.
Factorizing $\Phi(w)$ into two matrices as
\[
\Phi(w)
=
\left( \frac{ \phi_{i,j}(w)  }{\phi_{j,j}(w)  }\right)_{0 \leq i,j \leq L-1}
\cdot
 \mbox{diag} \left(\phi_{j,j}(w)  \right)_{0 \leq j \leq L-1},
\]
we then define new power series 
$f_i(w)$ from the first column of the former by
\begin{equation} \label{eq:deffi}
 f_i(w)
 =\sum_{k=0}^{\infty}b^i_k w^k
 = \frac{\phi_{i, 0}(w)}{\phi_{0, 0}(w)} 
 \quad
(0\leq   i\leq L-1).
\end{equation}
Note that the coefficients of the {\it diagonal free} part
\[\left( \frac{ \phi_{i,j}(w)  }{\phi_{j,j}(w)  }\right)_{0 \leq i,j \leq L-1}-I\]
can be determined recursively by (\ref{lineardiffeq});
see \cite[Proposition 2.2]{Jimbo-Miwa-Ueno}.
Since it holds that $f_0(w)=1$ and $f_i(0)=0$ $(i \neq 0)$, 
we can apply the Hermite--Pad\'e approximation problem
 (\ref{eq:degree})--(\ref{eq:seikika})
 and the simultaneous Pad\'e approximation problem 
 (\ref{eq:sP1})--(\ref{eq:sP2})
considered in Section~\ref{sec:HPandSP} 
to the set of $L$
 formal power series $\{f_0(w),\dots, f_{L-1}(w)\}$.
Define the matrices
\begin{align*}
Q(w)
&=\left(w^{1-\delta_{i,j}}Q^{(i)}_j(w)\right)_{0 \leq i,j \leq L-1}
\in {{\mathbb C}[w]}^{L \times L},
\\
R(x)&=x^nQ(x^{-1})\in {{\mathbb C}[x]}^{L \times L}.
\end{align*}
Recall here that $\deg Q^{(i)}_j(w) \leq n-1+ \delta_{i,j}$.
The result is stated as follows.

\begin{theorem} 
 The polynomial matrix $R(x)$ provides the representation matrix of the Schlesinger transformation for 
 {\rm(\ref{lineardiffeq})} 
 which shifts the 
 characteristic exponents at $x=\infty$ 
 by ${\boldsymbol n}=((L-1)n,-n,\dots,-n) \in {\mathbb Z}^L$.
\end{theorem}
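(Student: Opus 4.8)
The plan is to verify directly that $R(x)Y(x)$ satisfies a new linear differential equation of the required type, and that the transformation $Y \mapsto R(x)Y$ has exactly the prescribed effect on the characteristic data at each singular point, especially at $x=\infty$. I would set $\widetilde{Y}(x) = R(x)Y(x)$ and compute $\widetilde{A}(x) = \bigl(\tfrac{dR}{dx}(x) + R(x)A(x)\bigr)R(x)^{-1}$; the content of a Schlesinger transformation is that this $\widetilde{A}(x)$ is again rational of the same class (same finite singular points with the same Poincaré ranks, no new singularities), while the exponents at $x=\infty$ are shifted by $\boldsymbol{n}$. The first step is therefore to control $R(x)^{-1}$. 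By Mahler's duality (Theorem \ref{th:mahler}, with the normalization $D=I$), setting $P(w)=\bigl(w^{1-\delta_{i,j}}P^{(i)}_j(w)\bigr)$ we have $Q(w)^{\mathrm T}P(w)=w^{nL}I$, hence in the variable $x=1/w$ one gets $R(x)^{-1} = x^{-n(L-1)}\,{}^{\mathrm T}\!\bigl(x^{n(L-1)}P(1/x)\bigr)$ up to the explicit diagonal factor coming from the degrees; the key point is that both $R(x)$ and $R(x)^{-1}$ are polynomial in $x$ after removing a controlled power of $x$, so $\det R(x)$ is a monomial in $x$ (its degree and leading coefficient being read off from the degree bounds (\ref{eq:degree}) and the normalization (\ref{eq:seikika})). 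This forces $\widetilde{A}(x)$ to have no new poles in $\mathbb{C}\setminus\{a_1,\dots,a_N\}$, and the local analysis at each $a_\mu$ shows the Poincaré rank there is unchanged because $R(a_\mu)$ is invertible.

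The heart of the argument is the behaviour at $x=\infty$, i.e. at $w=0$. Here I would use the approximation conditions in their original form. Write the fundamental solution as $Y \simeq \Phi(w)e^{T^{(\infty)}(w)}$, factor $\Phi(w) = \widehat{\Phi}(w)\cdot\mathrm{diag}(\phi_{j,j}(w))$ where $\widehat{\Phi}$ is the diagonal-free part with first column $(f_i(w))_i$ (by (\ref{eq:deffi})), and then examine $R(x)\Phi(w) = w^{-n}Q(w)\Phi(w)$. The conditions (\ref{eq:HPkinji}) say precisely that $\sum_j w^{1-\delta_{i,j}}Q^{(i)}_j(w) f_j(w) = w^{Ln}(\delta_{i,0}+O(w))$, which is the statement that the $i$th row of $Q(w)$ applied to the first column of $\widehat{\Phi}(w)$ produces a quantity vanishing to order $Ln$ in $w$, with the sole surviving top-order contribution in row $i=0$; the degree bounds (\ref{eq:degree}) simultaneously guarantee that $w^{-n}Q(w)\Phi(w)$ has at worst a pole of the expected order and that the transformed series $\widehat{\Phi}^{\mathrm{new}}(w) = I + O(w)$ after we reabsorb the diagonal exponential shift. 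Matching the resulting local exponents, one checks that the $(0,0)$ entry contributes $+(L-1)n$ to $t^{(\infty)}_{0,0}$ while the remaining diagonal entries each contribute $-n$; this is exactly the shift $\boldsymbol{n}=((L-1)n,-n,\dots,-n)$. One also verifies that $T^{(\infty)}_{-j}$ for $j\ge 1$, the Stokes multipliers, and the connection matrices are unaffected, since $R(x)$ is single-valued and polynomial near $\infty$ and acts trivially on the exponential part $e^{T^{(\infty)}(x)}$.

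I expect the main obstacle to be the bookkeeping at $x=\infty$: one must show not merely that the exponent at $x=\infty$ in the $(0,0)$ slot jumps by $(L-1)n$ and the others drop by $n$, but that the transformed solution is \emph{holomorphically normalizable} in the Jimbo--Miwa--Ueno sense, i.e. that $\widehat{\Phi}^{\mathrm{new}}(w)$ is again of the form $I + Y_1^{\mathrm{new}}w + \cdots$ with no spurious logarithmic terms and no mixing of the shifted exponents — this is where the precise interplay between the degree constraints (\ref{eq:degree}), the normalization (\ref{eq:seikika}), and the exact vanishing order $w^{Ln}$ in (\ref{eq:HPkinji}) is used, together with the genericity hypotheses (\ref{eq:unique}) which guarantee $Q^{(i)}_j$ exist and are unique. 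Once that normalization is in place the rest is a routine comparison of the two systems' local data at every singular point, so the proof reduces to: (i) $\det R(x)$ is a monomial (degree count via Mahler duality); (ii) invertibility at finite $a_\mu$ preserves Poincaré rank; (iii) the $w\to 0$ computation above produces the shift $\boldsymbol{n}$ and nothing else. I would also invoke \cite[Proposition 2.2]{Jimbo-Miwa-Ueno} to know the diagonal-free part is determined by (\ref{lineardiffeq}), so that the approximation data genuinely encodes the local structure of $Y$.
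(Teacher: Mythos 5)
Your proposal is correct and follows essentially the same route as the paper: Mahler's duality plus the degree bounds to show $\det R(x)$ is a nonzero constant (not merely a monomial --- your own observation that both $R$ and $R^{-1}$ are polynomial already forces this), hence no effect at finite points, followed by the computation $R(x)\Phi(w)=w^{-n}Q(w)\Phi(w)=(I+O(w))\,\mathrm{diag}(w^{(L-1)n},w^{-n},\dots,w^{-n})$ at $w=0$ using (\ref{eq:HPkinji}) and (\ref{eq:seikika}) to read off the shift $\boldsymbol{n}$. The extra bookkeeping you flag (normalizability of the new $\hat{Y}^{(\infty)}$, invariance of the exponential part and of the connection/Stokes data) is exactly what the paper's displayed identity (\ref{eq:henkantenkai}) delivers.
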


\begin{proof}
 From Theorem~\ref{th:mahler},
 we have $|Q(w)|\cdot | P(w)|=w^{L^2n}$.
 The conditions for the degrees (\ref{eq:degree}) and (\ref{eq:sP1}) 
 shows that
 $|Q(w)|$ is of degree at most 
  $Ln$ and $|P(w)|$ at most $L(L-1)n$, respectively.
 Consequently, it holds that
  $|Q(w)|=cw^{Ln}$ and $|P(w)|=c^{-1}w^{L(L-1)n}$
 for some constant $c \neq 0$;
 and thus $|R(x)|=c$.
 It implies that $R(x)$ is an invertible matrix 
 at any $x\in{\mathbb C}$.
 Therefore, 
 the transformation $Y(x)\mapsto R(x)Y(x)$ does not affect the regularity or the singularity
 of $Y(x)$ at any $x\in{\mathbb C}$.
 Let us observe the influence at $x=\infty$ of this transformation.
 It follows from the approximation conditions
(\ref{eq:HPkinji}) and (\ref{eq:seikika}) that
 \begin{align}
 \nonumber
  R(x)\Phi(w)&=w^{-n}Q(w)\Phi(w) \\
   \nonumber
  &=w^{-n}
  \left(Q^{(i)}_i\phi_{i, j}+\sum_{k\neq i}wQ^{(i)}_k\phi_{k, j}
  \right)_{0 \leq i,j \leq L-1}
  \\
  &=
  \left(I+O(w)\right)
  \mbox{diag}
  \left(w^{(L-1)n}, w^{-n}, \dots, w^{-n}
  \right).
  \label{eq:henkantenkai}
 \end{align}
 Noticing the expression
 \begin{equation*}
 e^{T^{(\infty)}(x)}=
 \mbox{diag} \left(w^{t^{(\infty)}_{0,j}}  \right)_{0 \leq j \leq L-1}
 e^{\sum_{j=1}^{r_{\infty}}T^{(\infty)}_{-j}\frac{w^{-j}}{-j}}
\end{equation*}
 of the exponential part of $Y(x)$,
 we can conclude that $Y(x)\mapsto R(x)Y(x)$ 
 induces 
 the Schlesinger transformation that shifts the characteristic exponents at $x=\infty$ 
 as
 ${\boldsymbol t}^{(\infty)}_0\mapsto {\boldsymbol t}^{(\infty)}_0+{\boldsymbol n}$.
\end{proof}

\begin{remark}
Taking the determinants of the both sides of (\ref{eq:henkantenkai}), 
 we have $|R(x)| \cdot |\Phi (w)|=1+O(w)$.
 Combining this with $|\Phi (w)|=1+O(w)$ yields $c=| R(x)|=1$.
\end{remark}

\section{Determinant structure of $\tau$-quotients} 
\label{sec:dettau}

In this section, we investigate the influence on the $\tau$-function by the Schlesinger transformation. 


We consider
the Schlesinger transformation 
of 
a linear differential equation (\ref{lineardiffeq}), 
which shifts the characteristic exponents at $x=\infty$ by
\[ {\boldsymbol n}=\left((L-1)n,-n,\dots ,-n\right)\in {\mathbb Z}^L
\]
for a positive integer $n$;
see Section~\ref{sec:Stransformation}.
Let $\tau_n$ denote
the $\tau$-function associated with the holonomic deformation of the resulting linear
 differential equation after the
Schlesinger transformation,
while $\tau_0$ denotes that of the original 
(\ref{lineardiffeq}).

First, 
we shall look at a relation between $\tau_0$ and $\tau_1$.
According to 
\cite[Theorem 4.1]{Jimbo-Miwa}
it holds that
\[
 \frac{ \tau_1}{\tau_0}
  =\mbox{const.}
  \begin{vmatrix} G^{(\infty, \infty)(1,1)}_{1,0} & G^{(\infty,\infty)(1,1)}_{2,0} & \cdots & G^{(\infty, \infty)(1,1)}_{L-1,0} \\
     G^{(\infty, \infty)(1,2)}_{1,0} & G^{(\infty,\infty)(1,2)}_{2,0} & \cdots & G^{(\infty,\infty)(1,2)}_{L-1,0} \\
     \vdots & \vdots & \ddots & \vdots \\
     G^{(\infty,\infty)(1,L-1)}_{1,0} & G^{(\infty,\infty)(1,L-1)}_{2,0} & \cdots & G^{(\infty,\infty)(1,L-1)}_{L-1,0} 
 \end{vmatrix},
\]
where $G^{(\infty,\infty)(1,l)}=(G^{(\infty,\infty)(1,l)}_{i, j})$ $(l\in {\mathbb Z})$ 
is a special case of the {\it characteristic matrices}
and is defined by the following generating function:
\[
  \sum_{l\in{\mathbb Z}}G^{(\infty,\infty)(1,l)}w^l=\Phi(w)
\]
or equivalently
\[
  G^{(\infty,\infty)(1,l)}_{i,j}=a^{i,j}_l
\]
with $a^{i,j}_l=0$ for $l<0$.
Thus we find that
\begin{align} \label{eq:tau1/tau0}
  \frac{\tau_1}{\tau_0}
  =\mbox{const.}\begin{vmatrix} a^{1,0}_1 & a^{2,0}_1 & \cdots & a^{L-1,0}_1 \\
     a^{1,0}_2 & a^{2,0}_2 & \cdots & a^{L-1,0}_2 \\
     \vdots & \vdots & \ddots & \vdots \\
     a^{1,0}_{L-1} & a^{2,0}_{L-1} & \cdots & a^{L-1,0}_{L-1}
 \end{vmatrix}.
\end{align}
Here we note the following elementary fact.
\begin{lemma} 
  \label{lem:simple}
 Let
 \begin{align*}
  \sum_{k=1}^{\infty}\alpha^{i}_k w^k, \quad
   \sum_{k=1}^{\infty}\beta^{i}_k w^k \quad 
   (1 \leq  i \leq L-1) 
   \quad  \text{and} \quad  \sum_{k=0}^{\infty}\gamma_kw^k
 \end{align*}
 be formal power series,
 where $\gamma_0=1$.
 If the relation
  \begin{equation*}
   \sum_{k=1}^{\infty}\alpha^{i}_k w^k
   =
   \left(\sum_{k=1}^{\infty}\beta^{i}_kw^k\right)
   \left(\sum_{k=0}^{\infty}\gamma_kw^k\right)
   \end{equation*}
 among the formal power series holds for each $i$, then the equality
 \begin{equation*}
  \begin{vmatrix} 
  \alpha^{1}_1 & \alpha^{2}_1 & \cdots & \alpha^{L-1}_1 \\
     \alpha^{1}_2 & \alpha^{2}_2 & \cdots & \alpha^{L-1}_2 \\
     \vdots & \vdots & \ddots & \vdots \\
     \alpha^{1}_{L-1} & \alpha^{2}_{L-1} & \cdots & \alpha^{L-1}_{L-1}
 \end{vmatrix}=\begin{vmatrix} 
  \beta^{1}_1 & \beta^{2}_1 & \cdots & \beta^{L-1}_1 \\
     \beta^{1}_2 & \beta^{2}_2 & \cdots & \beta^{L-1}_2 \\
     \vdots & \vdots & \ddots & \vdots \\
     \beta^{1}_{L-1} & \beta^{2}_{L-1} & \cdots & \beta^{L-1}_{L-1}
 \end{vmatrix}
 \end{equation*}
 regarding their coefficients holds. 
\end{lemma}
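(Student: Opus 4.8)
The plan is to prove the equality column by column, showing that for each fixed $i$ the first $L-1$ coefficients of $\alpha^i$ can be obtained from those of $\beta^i$ by elementary column operations that are uniform in $i$. Concretely, writing out the Cauchy product gives $\alpha^i_k=\sum_{l=1}^{k}\beta^i_l\gamma_{k-l}=\beta^i_k+\sum_{l=1}^{k-1}\gamma_{k-l}\beta^i_l$, where we used $\gamma_0=1$. Thus the column vector $(\alpha^i_1,\dots,\alpha^i_{L-1})^{\mathrm T}$ equals $U(\beta^i_1,\dots,\beta^i_{L-1})^{\mathrm T}$, where $U$ is the lower-triangular $(L-1)\times(L-1)$ matrix with entries $U_{k,l}=\gamma_{k-l}$ for $l\le k$ (and $0$ otherwise); note that $U$ has $1$'s on the diagonal because $\gamma_0=1$, so $\det U=1$.

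The key point is that $U$ does not depend on $i$: it is built solely from the coefficients $\gamma_0,\gamma_1,\dots$ of the common series. Therefore, if $M_\beta$ denotes the matrix with $(k,i)$-entry $\beta^i_k$ and $M_\alpha$ the one with $(k,i)$-entry $\alpha^i_k$, the relation above reads $M_\alpha=U M_\beta$ as $(L-1)\times(L-1)$ matrices. Taking determinants and using $\det U=1$ yields $\det M_\alpha=\det M_\beta$, which is exactly the claimed identity.

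There is really no obstacle here; the only thing to be slightly careful about is the bookkeeping of indices, namely that the truncation to $1\le k\le L-1$ is consistent on both sides (the Cauchy product for $\alpha^i_k$ with $k\le L-1$ involves only $\beta^i_l$ and $\gamma_m$ with $l,m\le L-1$, so no higher coefficients leak in) and that the statement is understood purely as an identity among polynomial expressions in the coefficients $\beta^i_k$ and $\gamma_m$, so that it holds irrespective of convergence. I would present the argument in these two short steps: first derive $\alpha^i_k=\beta^i_k+\sum_{l<k}\gamma_{k-l}\beta^i_l$ from the Cauchy product, then observe that subtracting suitable $\gamma$-multiples of the earlier rows from row $k$ on the left-hand determinant transforms it into the right-hand determinant without changing its value.
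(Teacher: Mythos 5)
Your proof is correct and is exactly the "straightforward verification" the paper alludes to: it uses the Cauchy product relation $\alpha^i_k=\sum_{l+m=k}\beta^i_l\gamma_m$ and the fact that the resulting unitriangular transformation is independent of $i$. No further comment is needed.
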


\begin{proof}
 It can be verified straightforwardly by using
 $\alpha_k^i=\sum_{l+m=k} \beta_l^i \gamma_m$.
\end{proof}

Returning to our situation, we have
\begin{align} \label{eq:a=b}
   \begin{vmatrix} a^{1,0}_1 & a^{2,0}_1 & \cdots & a^{L-1,0}_1 \\
     a^{1,0}_2 & a^{2,0}_2 & \cdots & a^{L-1,0}_2 \\
     \vdots & \vdots & \ddots & \vdots \\
     a^{1,0}_{L-1} & a^{2,0}_{L-1} & \cdots & a^{L-1,0}_{L-1}
 \end{vmatrix}=\begin{vmatrix} b^{1}_1 & b^{2}_1 & \cdots & b^{L-1}_1 \\
     b^{1}_2 & b^{2}_2 & \cdots & b^{L-1}_2 \\
     \vdots & \vdots & \ddots & \vdots \\
     b^{1}_{L-1} & b^{2}_{L-1} & \cdots & b^{L-1}_{L-1}
 \end{vmatrix}
 \end{align}
 from Lemma \ref{lem:simple} since $b^i_k$ and $a^{i,0}_k$ are mutually related by
(see (\ref{eq:Y}) and (\ref{eq:deffi}))
 \[
 f_i(w)=\sum_{k=0}^{\infty}b^i_kw^k=\frac{\phi_{i,0}(w)}{\phi_{0,0}(w)}=
 \frac{\sum_{k=0}^{\infty}a^{i,0}_kw^k}{1+O(w)}.
 \]
It thus follows from (\ref{eq:tau1/tau0}) that
 \begin{align} \label{eq:tau1/tau0=b}
 \frac{ \tau_1}{\tau_0}=\mbox{const.}\begin{vmatrix} b^{1}_1 & b^{2}_1 & \cdots & b^{L-1}_1 \\
     b^{1}_2 & b^{2}_2 & \cdots & b^{L-1}_2 \\
     \vdots & \vdots & \ddots & \vdots \\
     b^{1}_{L-1} & b^{2}_{L-1} & \cdots & b^{L-1}_{L-1}
 \end{vmatrix}.
\end{align}

Next,
we shall track how the entries of $\Phi(w)$ are changed after the Schlesinger transformation.
Define 
\[
 \overline{\Phi} (w)
 =\left(  \overline{\phi}_{i,j}(w)\right)_{0 \leq i,j \leq L-1}, \quad
  \overline{\phi}_{i,j}(w)=\sum_{k=0}^{\infty} \overline{a}^{i,j}_k
  w^k
\]
by
\[
  R(w)\Phi(w)=\overline{\Phi}(w) \
  \mbox{diag}
  \left( w^{(L-1)n}, w^{-n}, \dots, w^{-n}\right).
\]
In particular, the entry $\overline{\phi}_{i,0}(w)$ 
is obtained from the remainder 
of the Hermite--Pad\'e approximation
as 
(see (\ref{eq:remind}) and (\ref{eq:deffi}))
\begin{align*}
  \overline{\phi}_{i,0}(w)
  &=w^{-Ln}
\left(Q^{(i)}_i(w)\phi _{i,0}(w)+\sum_{j\neq i}wQ^{(i)}_j(w)\phi_{j,0}(w)\right) \\ 
   &=w^{-Ln}\phi_{0,0}(w)\rho^i(w).
\end{align*}
Let
\[
   \overline{f}_{i}(w)=\sum_{k=0}^{\infty}\overline{b}^{i}_k w^k
   =\frac{\overline{\phi}_{i,0}(w)}{\overline{\phi}_{0,0}(w)}
\]
and
 $\rho^i(w)=w^{Ln}\sum_{k=1}^{\infty}\rho^i_k w^{k}$ 
 for $1 \leq i \leq  L-1$
 as 
 in the previous sections.
Namely, 
overlined symbols denote the quantities after the Schlesinger transformation that shifts the characteristic exponents at $x=\infty$ 
as
 ${\boldsymbol t}^{(\infty)}_0\mapsto {\boldsymbol t}^{(\infty)}_0+{\boldsymbol n}$. 
Then, by applying Lemma~\ref{lem:simple} twice, we have
\begin{equation} \label{b=a=rho}
 \begin{vmatrix} \overline{a}^{1,0}_1 & \overline{a}^{2,0}_1 & \cdots & \overline{a}^{L-1,0}_1 \\
     \overline{a}^{1,0}_2 & \overline{a}^{2,0}_2 & \cdots & \overline{a}^{L-1,0}_2 \\
     \vdots & \vdots & \ddots & \vdots \\
     \overline{a}^{1,0}_{L-1} & \overline{a}^{2,0}_{L-1} & \cdots & \overline{a}^{L-1,0}_{L-1}
 \end{vmatrix}
 =
 \begin{vmatrix} \overline{b}^{1}_1 & \overline{b}^{2}_1 & \cdots & \overline{b}^{L-1}_1 \\
     \overline{b}^{1}_2 & \overline{b}^{2}_2 & \cdots & \overline{b}^{L-1}_2 \\
     \vdots & \vdots & \ddots & \vdots \\
     \overline{b}^{1}_{L-1} & \overline{b}^{2}_{L-1} & \cdots & \overline{b}^{L-1}_{L-1}
 \end{vmatrix}
 =
 \begin{vmatrix} \rho^{1}_1 & \rho^{2}_1 & \cdots & \rho^{L-1}_1 \\
     \rho^{1}_2 & \rho^{2}_2 & \cdots & \rho^{L-1}_2 \\
     \vdots & \vdots & \ddots & \vdots \\
     \rho^{1}_{L-1} & \rho^{2}_{L-1} & \cdots & \rho^{L-1}_{L-1}
 \end{vmatrix}.
\end{equation}

Finally, combining (\ref{eq:tau1/tau0=b}) and (\ref{b=a=rho})
yields that
\[
\frac{\tau_{n+1}}{\tau_n} 
 =
 \frac{\overline{\tau}_{1}}{\overline{\tau}_0}
  =\mbox{const.}\begin{vmatrix} \overline{b}^{1}_1 & \overline{b}^{2}_1 & \cdots & \overline{b}^{L-1}_1 \\
     \overline{b}^{1}_2 & \overline{b}^{2}_2 & \cdots & \overline{b}^{L-1}_2 \\
     \vdots & \vdots & \ddots & \vdots \\
     \overline{b}^{1}_{L-1} & \overline{b}^{2}_{L-1} & \cdots & \overline{b}^{L-1}_{L-1}
 \end{vmatrix}=\mbox{const.} \begin{vmatrix} \rho^{1}_1 & \rho^{2}_1 & \cdots & \rho^{L-1}_1 \\
     \rho^{1}_2 & \rho^{2}_2 & \cdots & \rho^{L-1}_2 \\
     \vdots & \vdots & \ddots & \vdots \\
     \rho^{1}_{L-1} & \rho^{2}_{L-1} & \cdots & \rho^{L-1}_{L-1}
 \end{vmatrix}.  
\]
Substituting 
 (\ref{eq:reprhoij}) in the above, we obtain
\begin{equation}
  \frac{\tau_{n+1}}{\tau_{n}}
  =\mbox{const.}\,
  {D_n}^{-L+1}\det (E^{i,j}_n)_{1 \leq i,j\leq  L-1}. 
  \label{eq:t(n+1)/t(n)}
\end{equation}
Now we state the main theorem.

\begin{theorem} 
\label{thm:mainthm}
 Consider a holonomic deformation of {\rm(\ref{lineardiffeq})}.
 Let $\tau_0$ be the $\tau$-function associated with 
 {\rm(\ref{lineardiffeq})} and let
 $\tau_n$ be the $\tau$-function associated with the transformed equation from {\rm(\ref{lineardiffeq})}
 by the Schlesinger transformation that shifts the 
 characteristic exponents at $x=\infty$ by
 \[ {\boldsymbol n}=\left((L-1)n,-n,\dots ,-n\right)\in {\mathbb Z}^L
\]
for a positive integer $n$.
Then the following determinant formula for the $\tau$-quotient
holds{\rm:}
 \begin{equation} \label{eq:taun/tau0}
   \frac{\tau_n}{\tau_0}=\mbox{\rm const.}\,D_n,
 \end{equation}
 where $D_n$ is the block Toeplitz determinant 
 defined by {\rm(\ref{eq:Toeplitz})} and
 {\rm(\ref{eq:defDn})} 
 and its entries $b^i_k$ 
 are specified by 
 {\rm(\ref{eq:Y})}  and  {\rm(\ref{eq:deffi})},
 i.e.
 the asymptotic solution to {\rm(\ref{lineardiffeq})}
 at $x=\infty$.
\end{theorem}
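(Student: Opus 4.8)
The plan is to deduce the main theorem from the already-established recursion \eqref{eq:t(n+1)/t(n)},
\[
 \frac{\tau_{n+1}}{\tau_n}
 =\mbox{const.}\,{D_n}^{-L+1}\det\!\left(E^{i,j}_n\right)_{1\leq i,j\leq L-1},
\]
together with the determinant representations \eqref{eq:defDn} and \eqref{eq:defE}. Telescoping the recursion gives
\[
 \frac{\tau_n}{\tau_0}=\prod_{m=0}^{n-1}\frac{\tau_{m+1}}{\tau_m}
 =\mbox{const.}\prod_{m=0}^{n-1}{D_m}^{-L+1}\det\!\left(E^{i,j}_m\right)_{1\leq i,j\leq L-1},
\]
so the theorem is equivalent to the purely combinatorial (determinant) identity
\begin{equation}
\label{eq:key}
 \det\!\left(E^{i,j}_m\right)_{1\leq i,j\leq L-1}
 =\mbox{const.}\,\frac{D_{m+1}\,{D_m}^{L-1}}{D_m}
 =\mbox{const.}\,D_{m+1}\,{D_m}^{L-2},
\end{equation}
valid for every $m\geq 0$; here $D_m$ and $E^{i,j}_m$ are the block Toeplitz determinants built from the single sequence $\{b^i_k\}$, so this is an identity among minors of a fixed (semi-infinite) block Toeplitz matrix and does not refer to differential equations at all. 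Granting \eqref{eq:key}, the product telescopes once more: the exponents of the $D_m$'s collapse and one is left with $\tau_n/\tau_0=\mbox{const.}\,D_n$, which is \eqref{eq:taun/tau0}.

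Thus the whole proof reduces to verifying the determinant identity \eqref{eq:key}. The natural strategy is to recognize both sides as (up to sign and the overall scalar coming from the normalization $b^0_0=1$, $b^0_k=0$) minors of one and the same block Toeplitz matrix, and then to apply a Sylvester-type / Desnanot--Jacobi-type identity for such minors. Concretely, I would set up the matrix whose column blocks are the $B^i_n((L-1)n,n)$ (so that $D_n=|\mathcal B|$ in the second form of \eqref{eq:defDn}) and interpret $E^{i,j}_n$ as a bordered version of $D_n$ in which one column of the $i$th block is replaced and one extra row indexed by $j$ is adjoined; passing from level $n$ to level $n+1$ amounts to appending one further column block and a corresponding band of rows. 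The $(L-1)\times(L-1)$ determinant $\det(E^{i,j}_n)_{i,j}$ is then exactly the quantity produced by eliminating these $L-1$ new rows and $L-1$ new columns, and a multilinear (condensation) identity expresses this as $D_{n+1}$ times a power of $D_n$. I would phrase and prove this at the level of an abstract rectangular-block Toeplitz (or even general) matrix, which is precisely what the paper defers to Appendix~\ref{secA:proofofdet}; moreover, since the identity has a Pfaffian shadow, it is cleaner to establish the Pfaffian analogue in full generality and then specialize, as the authors indicate.

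The main obstacle is \eqref{eq:key} itself: matching the signs and the scalar constants on the two sides (the $(-1)^{(L+i)n}$ factors in \eqref{eq:reprhoij}, the $b^0$-normalization used to pass between the two forms of \eqref{eq:defDn} and \eqref{eq:defE}, and the sign of the Sylvester/condensation identity) requires care, and identifying $\det(E^{i,j}_n)_{i,j}$ with the correct minor of the enlarged block Toeplitz matrix — i.e.\ checking that the bordering rows and columns line up consistently across all $L-1$ blocks — is the delicate bookkeeping step. Everything else (telescoping, the reduction from differential-equation data to a Toeplitz identity via Lemma~\ref{lem:simple}, and the collapse of exponents) is formal. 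I would therefore devote the bulk of the argument to proving \eqref{eq:key} in the self-contained combinatorial form stated above, relegating the general Pfaffian version to the appendix, and then close by the two telescoping computations indicated.
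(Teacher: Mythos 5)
Your proposal follows essentially the same route as the paper: starting from the recursion (\ref{eq:t(n+1)/t(n)}), reducing everything to the block Toeplitz determinant identity $\det(E^{i,j}_n)_{1\le i,j\le L-1}=D_{n+1}D_n^{L-2}$ (the paper's (\ref{eq:key}), proved in Appendix~\ref{secA:proofofdet} via a Sylvester-type identity obtained by specializing a Pfaffian Pl\"ucker relation), and then telescoping, with the base case $\tau_1/\tau_0=\mbox{const.}\,D_1$. The only cosmetic difference is that you telescope first and substitute the identity afterwards, whereas the paper simplifies each ratio to $\mbox{const.}\,D_{n+1}/D_n$ before telescoping; this changes nothing.
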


\begin{proof}
We have the equality
 \begin{equation}
 \label{eq:key}
    D_{n+1}{D_n}^{L-2}=\det (E^{i,j}_n)_{1 \leq i,j \leq L-1},
 \end{equation}
 which will be shown in Appendix~\ref{secA:proofofdet}.
 Therefore, (\ref{eq:t(n+1)/t(n)}) implies
 \[ 
   \frac{\tau_{n+1}}{\tau_n}=\mbox{const.}\,  
   \frac{D_{n+1}}{D_n}.
 \]
 It is clear from (\ref{eq:defDn}) and  (\ref{eq:tau1/tau0=b}) that
 $
   \tau_1/\tau_0=\mbox{const.}D_1.
 $
 Hence the theorem is proved.
\end{proof}

\begin{remark}
 In the case of a second-order Fuchsian linear differential equation,
 their isomonodromic deformations are governed by the Garnier systems and 
 the formula (\ref{eq:taun/tau0})
 has been established in \cite{Man}.
\end{remark}

\begin{remark}
  Jimbo and Miwa \cite{Jimbo-Miwa} 
 treat determinant representations of $\tau$-quotients for arbitrary Schlesinger transformations and their matrix entries 
 are written in terms of the characteristic matrices. 
 However, the characteristic matrices themselves are,
 in general, too complicated to compute explicitly.
 On the other hand,  Theorem \ref{thm:mainthm} above gives a much simpler representation of $\tau$-quotients 
 in terms of block Toeplitz determinants, 
 though the Schlesinger transformations are restricted to 
 a specific direction shifting the characteristic exponents at one point
 by ${\boldsymbol n}=((L-1)n,-n,\dots,-n)$.
 Note also that our formula involves only the first column of $\Phi(w)$, 
 where $\Phi(w)$ is the power series part of the asymptotic solution to (\ref{lineardiffeq}).
 It is expected that more general Schlesinger transformations are related to other types of approximation problems beyond Hermite--Pad\'e type.
It would be an interesting problem to explore such relationships.
\end{remark}

\begin{example}
Consider a $2 \times 2$ system of linear differential equations
\begin{equation}
\label{eq:p2_lin}
\frac{dY}{dx}=
\left(
\begin{pmatrix} 1& \\
 &-1
\end{pmatrix}x^2 +
\begin{pmatrix} &u\\
-2  \mu/u & 
\end{pmatrix}x
+\begin{pmatrix} \mu + t/2&  -u \lambda \\
-2 (\lambda \mu+ \theta)/u &-\mu-t/2
\end{pmatrix}
\right) Y
\end{equation}
with an irregular singularity of Poincar\'e rank
$3$ at $x=\infty$.
There exists a  unique fundamental system of solutions having
the asymptotic behavior of the form
\[
Y
\simeq
\Phi e^{T^{(\infty)}},
\quad
 \Phi =\left(\phi_{i,j}(w)\right)_{i,j=0,1}
 =I+O(w)
\]
at $x=\infty$,
where $w=1/x$ and
\[
T^{(\infty)}=
\begin{pmatrix}1 &  \\  & -1
\end{pmatrix}\frac{w^{-3}}{3} 
+\begin{pmatrix}t &  \\  & -t
\end{pmatrix} \frac{w^{-1}}{2} 
+\begin{pmatrix}\theta &  \\  & -\theta
\end{pmatrix}\log w.
\]
It thus follows that
\[f(w)=\sum_{k=1}^\infty b_k w^k = \frac{\phi_{1,0}(w)}{\phi_{0,0}(w)}
= -\frac{\mu}{u} w -\frac{\theta+\lambda \mu}{u} w^2 + \frac{\mu(\mu+t)}{2u}w^3+\cdots.
\]
The holonomic deformation of (\ref{eq:p2_lin})
amounts to its compatibility condition with 
\[
\frac{\partial Y}{\partial t}=
\left(
\begin{pmatrix}
1 &  \\
&-1
\end{pmatrix}\frac{x}{2}+ 
\begin{pmatrix}
 & u/2 \\
-\mu /u&
\end{pmatrix}
\right)Y,
\]
which reads
\[
\frac{d \lambda}{dt}= \lambda^2+\mu+\frac{t}{2}, \quad 
\frac{d \mu}{dt}=-2 \lambda \mu -\theta, \quad 
\frac{d }{dt} \log u = -\lambda;
\]
the first two equations are equivalent to
the Painlev\'e II equation (see \cite[Appendix~C]{Jimbo-Miwa}):
\[\frac{d^2\lambda}{dt^2}= 2 \lambda^3+t \lambda + \alpha, \quad \alpha = \frac{1}{2} -\theta.
\]
In this case, since $L=2$ the block Toeplitz determinat $D_n$ 
(see (\ref{eq:defDn})) reduces to a usual one and 
Theorem~\ref{thm:mainthm} shows that the $\tau$-quotient $\tau_n/\tau_0$ is equal to 
\[
 D_n=\begin{vmatrix}
b_n&b_{n-1}& \cdots & b_1
\\
b_{n+1}& b_n & \cdots & b_2 \\
\vdots& \vdots & \ddots & \vdots \\
b_{2n+1}&b_{2n} &\cdots&b_n
\end{vmatrix}
\]
up to multiplication by constants.
It is interesting to note that if we substitute the rational solution 
$\theta=1/2$, $\lambda=0$ and  $\mu=-t/2$ then $f(w)=\sum_{k=1}^\infty b_k w^k$ can be expressed as a logarithmic derivative of a shifted Airy function ${\rm Ai} (t+w^{-2})$; 
this phenomenon has been studied closely 
in connection with
integrable systems
\cite{IKN,JKM1,KMO} (see also \cite{CC2}).
\end{example}

\section{Particular solutions to holonomic deformation} \label{sec:particularsol}
In this section, 
as an application of results in the previous section,
we present a method for constructing particular solutions to 
holonomic deformation equations such as the Painlev\'e equations.

Consider the $L \times L$ system of linear differential equations 
(\ref{lineardiffeq}).
Take a new point 
$a_{N+1}\in{\mathbb C}\setminus \{a_1,\dots,a_N\}$ 
where (\ref{lineardiffeq}) is non-singular.
The solution 
 (\ref{eq:fundsol})
normalized at $x=\infty$ can be expanded around $x=a_{N+1}$ as follows:
\[
  Y(x)=Y(a_{N+1})
  \Psi(w),
  \quad
  \Psi(w)=Y(a_{N+1})^{-1}
  \sum_{n=0}^{\infty}Y^{(n)}(a_{N+1})\frac{{w}^n}{n!},
\]
where $w=x-a_{N+1}$
and $Y^{(n)}(x)$ denotes the $n$th derivative of $Y(x)$ with respect to $x$.
Write the power series part
$\Psi(w)$
as
\[
 \Psi(w)=
 \left(  \psi_{i,j}(w)\right)_{0 \leq i,j \leq L-1}
\]
and put
\[
  f_i(w)=\frac{\psi_{i,0}(w)}{\psi_{0,0}(w)}
  \quad
   (0 \leq i \leq L-1).
\]
We apply the Hermite--Pad\'e approximation problem
 (\ref{eq:degree})--(\ref{eq:seikika})
to the set of formal power series
$\{f_0=1,f_1,\dots,f_{L-1}\}$,
and introduce the matrices
\begin{align*}
Q(w)&=\left(w^{1-\delta_{i,j}}Q^{(i)}_j(w)\right)_{0 \leq i,j \leq L-1}
\in {\mathbb C}[w]^{L\times L},
\\
 R(x)&=(x-a_{N+1})^{-n}Q(x-a_{N+1})\in {\mathbb C}[(x-a_{N+1})^{-1}]^{L\times L}
\end{align*}
 made from its approximants 
$Q^{(i)}_j(w)$.
Using $R(x)$, we define the rational function matrix
\[
  S(x)=Y(a_{N+1})R(\infty)^{-1}R(x)Y(a_{N+1})^{-1}.
\]
Then $\widetilde{Y}(x)=S(x)Y(x)$ satisfies a system of differential equations of the form
\begin{equation} \label{eq:lineardiffeq2}
 \frac{d\widetilde{Y}}{dx}=
 \left(\sum_{\mu=1}^N\sum_{j=0}^{r_{\mu}}
 \widetilde{A}_{\mu,-j}(x-a_{\mu})^{-j-1}
 -\sum_{j=1}^{r_{\infty}}\widetilde{A}_{\infty,-j}x^{j-1}+\widetilde{A}_{N+1}(x-a_{N+1})^{-1}\right)\widetilde{Y}.
\end{equation}
This means that the transformation $Y(x)\mapsto \widetilde{Y}(x)=S(x)Y(x)$ induces one regular singularity $a_{N+1}$ in (\ref{lineardiffeq}).
It is clear by definition of  $S(x)$
that the characteristic exponents of (\ref{eq:lineardiffeq2}) at 
the additional regular singularity
$x=a_{N+1}$ read ${\boldsymbol n}=((L-1)n,-n,\dots,-n)$.
Furthermore, we see that if $Y(x)$ is subject to 
a holonomic deformation of (\ref{lineardiffeq}),
then $\widetilde{Y}(x)$ is also subject to 
that of (\ref{eq:lineardiffeq2})
since $Y(x)$ and $\widetilde{Y}(x)$ have the same monodromy.
Consequently, at the level of holonomic deformations,
we have a certain {\it inclusion relation} 
between solutions as described below.

Suppose for simplicity that 
(\ref{lineardiffeq}) is Fuchsian, i.e. 
$r_\mu=0$ for any $\mu=1,\ldots,N,\infty$.
One can associate with (\ref{lineardiffeq})
an $(N+1)$-tuple 
\[
M=\{(m_{1,1},m_{1,2},\dots,m_{1,k_1}),
\ldots,(m_{N,1},m_{N,2},\dots,m_{N,k_N}),
(m_{\infty,1},m_{\infty,2},\dots,m_{\infty,k_\infty})
\}
\]
of partitions of $L$,
called the {\it spectral type},
which indicates how the characteristic exponents overlap at each 
of the $N+1$ singularities $x=a_\mu$ 
($\mu=1,\ldots,N,\infty$).
Note that by means of the spectral type
the number of accessary parameters in 
(\ref{lineardiffeq}) 
is estimated at
\[
2+(N-1)L^2- \sum_{i=1, \ldots, N, \infty}
 \sum_{j=1}^{k_i} {m_{i,j}}^2;
\]
see e.g. \cite{oshima}. 
The argument above provides a procedure
to obtain a new system 
(\ref{eq:lineardiffeq2})
of spectral type 
$\widetilde{M}=M \cup (L-1,1)$
from the original system (\ref{lineardiffeq}) of spectral type $M$
while keeping the monodromy.
Therefore,
the general solution to the deformation equation of
(\ref{lineardiffeq}) 
gives rise to a particular solution to 
the deformation equation of (\ref{eq:lineardiffeq2}).
This phenomenon is exemplified by the fact that
the Garnier system in $N+1$ variables includes the Garnier system in $N$ variables as its particular solution;
cf. \cite[Theorem 6.1]{Tsu0}
It is also interesting to mention that
 if the original
(\ref{lineardiffeq}) is {\it rigid},
 i.e. having no accessory parameter
such as Gau\ss's 
hypergeometric equation,
then 
the deformation equation of (\ref{eq:lineardiffeq2}) 
possesses 
a solution written in terms of
 that of the rigid system (\ref{lineardiffeq}) itself.
In this case, our procedure gives a natural interpretation to 
Suzuki's recent work \cite{Su1},
in which 
a list of rigid systems or hypergeometric equations appearing in particular solutions 
to the higher order Painlev\'e equations
is presented.

\begin{example}[Case $L=N=2$]
 Let us consider a $2 \times 2$ Fuchsian system of differential equations 
 \begin{equation} \label{eq:hypergeo}
  \frac{dY}{dx}=\left(\frac{A_0}{x}+\frac{A_1}{x-1}\right)Y
 \end{equation}
with three regular singularities $x=0,1, \infty$, 
whose spectral type is $\{(1,1),(1,1),(1,1)\}$.
 We can assume without loss of generality
 that $ |A_i|=0$ and $A_{\infty}=-A_0-A_1$ is diagonal,
i.e. $A_{\infty}=\mbox{diag}(\kappa_1, \kappa_2)$.
It is well known that the entries of a fundamental system of solutions to (\ref{eq:hypergeo})
 can be written in terms of Gau\ss's hypergeometric function.
 If we take an arbitrary point 
 $t\in\mathbb{C}\setminus\{0,1\}$ 
 and apply the procedure above,
 then we obtain a system of differential equations of the form
 \begin{equation} \label{eq:hypP6}
  \frac{d\widetilde{Y}}{dx}=\left(\frac{\widetilde{A}_0}{x}+\frac{\widetilde{A}_1}{x-1}+\frac{\widetilde{A}_t}{x-t}\right)\widetilde{Y};
 \end{equation}
it is a $2 \times 2$ Fuchsian system with four regular singularities $x=0,1, \infty, t$,
whose spectral type is $\{(1,1),(1,1),(1,1),(1,1)\}$.
We know from the construction that the monodromy of (\ref{eq:hypP6}) is independent of $t$, 
i.e. (\ref{eq:hypP6}) 
 is subject to an isomonodromic deformation with a deformation parameter $t$.
 Thus we can derive a particular solution
 written in terms of Gau\ss's hypergeometric functions
  to the Painlev\'e VI equation with constant parameters
 \[
   \alpha=   \frac{(\theta_{\infty}-1)^2}{2}, \quad
   \beta=\frac{-{\theta_0}^2}{2}, \quad 
   \gamma=\frac{{\theta_1}^2}{2}, \quad
   \delta=\frac{1-4n^2}{2},
 \]
 where $\theta_{\infty}=\kappa_1-\kappa_2$,
 $\theta_i=\mbox{tr}A_i= \mbox{tr} \widetilde{A}_i$ $(i=0,1)$
  and $n\in\mathbb{Z}_{\geq 0}$.
 Refer to  \cite{gausspainleve, Jimbo-Miwa} for the Painlev\'e VI equation.
\end{example}

\appendix
\section{Proof of an identity for determinants} 
\label{secA:proofofdet}
%
%
In this appendix we derive the determinant identity (\ref{eq:key}), 
which is used to verify the main theorem of this paper.
%
We first prove its Pfaffian analogue in a general setting to achieve better perspectives,
and then we reduce it to the determinant case.
%
The reader can refer to 
\cite{IO1} for various Pfaffian identities and their applications.
\par\smallbreak
Let $A$ be a set of alphabets,
which is a totally ordered set.
Let $A^*$ denote the set of words over $A$.
For a word  $I\in A^*$ and its permutation $J$,
$\sgn(I,J)$ denotes the sign of the permutation that converts $I$ into $J$
 if $I$ has no duplicate letter,
 and $0$ otherwise.
Given a word $I=i_1i_2\cdots i_{2n}\in A^*$ of length $ \sharp I= 2n$,
its permutation $J=j_{1}j_{2}\cdots j_{2n}$ 
is called a {\it perfect matching} on $I$
if 
$\sigma(2k-1)<\sigma(2k)$ for $1\leq k\leq n$ and
$\sigma(2k-1)<\sigma(2k+1)$ for $1\leq k\leq n-1$,
where $\sigma \in \Sym_{2n}$
and $j_{1}j_{2}\cdots j_{2n}=i_{\sigma(1)}i_{\sigma(2)}\cdots i_{\sigma(2n)}$.
%
%
This perfect matching  
is designated by the configuration in the $xy$ plane
which
 contains 
 $2n$ vertices $v_{k}=(k,0)$ ($1\leq k\leq2n$) labeled with $i_k$ 
and $n$ arcs above the $x$ axis connecting the vertices $v_{\sigma(2k-1)}$ and $v_{\sigma(2k)}$ ($1\leq k\leq n$).
Let $\Fam(I)$ denote the set of all perfect matchings on $I$.
For a perfect matching $J=j_1j_2\cdots j_{2n}\in\Fam(I)$,
we call
$\Mat(J)=\{(j_{2k-1},j_{2k})\,|\,1\leq k\leq n\}$ 
the set of arcs in $J$.
It is easy to see that
the sign $\sgn(I,J)$ equals $(-1)^c$, where $c$ is the number of crossings of the arcs in the configuration of $J$.
For example, 
the set of perfect matchings on a word 
$I=1234$
reads 
\[\Fam(I)=\{1234,1324,1423\}.\]
If we take a perfect matching $J=1423\in\Fam(I)$ then we have the set of arcs
$\Mat(J)=\{(1,4),(2,3)\}$ and 
 $J$ is designated by the following configuration:
\setlength\unitlength{2pt}
\begin{center}
\begin{picture}(30,20)(0,0)
\put( 0,10){\circle*{2}}
\put(10,10){\circle*{2}}
\put(20,10){\circle*{2}}
\put(30,10){\circle*{2}}
\qbezier( 0,10)(15,25)(30,10)
\qbezier(10,10)(15,15)(20,10)
\put(-1, 4){$1$}
\put( 9, 4){$2$}
\put(19, 4){$3$}
\put(29, 4){$4$}
\end{picture}
\end{center}
There is no crossing of the arcs and 
certainly
$\sgn(I,J)=1$ holds.

Let $f$ be a map which assigns an element of a commutative ring to each pair $(i,j)\in A\times A$ 
such that $f(j,i)=-f(i,j)$.
Such a map is called a {\it skew symmetric} map.
For each perfect matching $J=j_{1}j_{2}\cdots j_{2n}\in\Fam(I)$,
we define the weight $\wt_{f}(J)$ as
\[
\wt_{f}(J)=\sgn(I,J)\prod_{(i,j)\in\Mat(J)}f(i,j).
\]
The {\it Pfaffian} $\Pf_{f}(I)$ of $f$ corresponding to 
the word
$I=i_{1}i_{2}\cdots i_{2n}$ 
is the sum of
the weights $\wt_{f}(J)$, where $J$ runs over all perfect matchings on $I$,
i.e.,
\[
\Pf_f(I)= \sum_{J \in \Fam(I)} \wt_f(J).
\]
We use the convention that $\Pf_f(I)=1$ if $I=\emptyset$.
It is known that 
\begin{equation} \label{eq:indexchange}
\Pf_{f}(K)=\sgn(I,K)\Pf_{f}(I),
\end{equation}
where $K$ is a permutation of $I$.
Especially $\Pf_{f}(I)=0$ if $I$ has a duplicate letter.
For example,
the Pfaffian of $f$ corresponding to $I=1234$
is given as
\[
\Pf_f(I)= f(1,2)f(3,4)-f(1,3)f(2,4)+f(1,4)f(2,3).
\]

The following identity is the Pl\"ucker relation for Pfaffians, which is
originally due to Ohta \cite{O1} and Wenzel \cite{W1}.
Ohta's proof is by algebraic arguments,
and Wenzel employs  the Pfaffian form.
The proof we present here is more combinatorial one based on the same idea as in \cite{IW1}.
\begin{theorem}[cf. \cite{IO1,O1,W1}]
\label{thm:Ohta-Wenzel}
Let $I,J,K\in A^{\ast}$ be words such that $\sharp I$ and $\sharp J$ are odd and $\sharp K$ is even.
Then it holds that
\begin{align}
&\sum_{i\in I}\sgn(IJ,(I\setminus\{i\})iJ)
\Pf_f((I\setminus\{i\})K)\Pf_f(iJK)
\nonumber\\
&\qquad
=\sum_{j\in J}\sgn(IJ,Ij(J\setminus\{j\}))\Pf_f(IjK)\Pf_f((J\setminus\{j\})K).
\label{eq:Ohta-Wenzel}
\end{align}
\end{theorem}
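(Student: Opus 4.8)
The plan is to prove (\ref{eq:Ohta-Wenzel}) by expanding both sides over superposed perfect matchings, then cancelling most terms with a sign-reversing involution and matching the rest with a weight-preserving bijection, along the combinatorial lines of \cite{IW1}.

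First I would expand every Pfaffian occurring in (\ref{eq:Ohta-Wenzel}) by its definition $\Pf_f(W)=\sum_{M\in\Fam(W)}\wt_f(M)$. A term on the left-hand side is then labelled by a letter $i\in I$ together with a pair of perfect matchings $(\mathsf{A},\mathsf{B})$, where $\mathsf{A}$ lies on the word $(I\setminus\{i\})K$ and $\mathsf{B}$ on the word $iJK$, and its monomial is $\prod_{(a,b)\in\Mat(\mathsf{A})}f(a,b)\cdot\prod_{(a,b)\in\Mat(\mathsf{B})}f(a,b)$. Superimposing the arcs of $\mathsf{A}$ and $\mathsf{B}$, with the two occurrences of each letter of $K$ identified, yields a graph on $I\cup J\cup K$ in which the letters of $I\cup J$ have degree one and those of $K$ have degree two; it is therefore a disjoint union of simple paths whose endpoints lie in $I\sqcup J$, together with some cycles supported on $K$, and since $\sharp I$ and $\sharp J$ are odd there are $(\sharp I+\sharp J)/2$ paths. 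Exactly one of these paths, call it $\pi$, passes through the distinguished letter $i$, and along $\pi$ the arcs alternate between those of $\mathsf{A}$ and those of $\mathsf{B}$, starting and ending in a way dictated by whether the second endpoint of $\pi$ lies in $I$ or in $J$. The right-hand side admits the identical description with $I$ and $J$ interchanged and with $j\in J$ as the distinguished letter.

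The key operation is to swap, along the single path $\pi$, the membership of each arc in $\mathsf{A}$ versus $\mathsf{B}$, leaving all other arcs (and hence the monomial) untouched. A short set-theoretic check shows that this produces again an admissible term: if the second endpoint of $\pi$ is a letter $i'\in I$, the swap yields the left-hand term labelled by $i'$ with the same path $\pi$, and applying the swap once more recovers the original; if the second endpoint is a letter $j\in J$, the swap yields the right-hand term labelled by $j$. Thus the left-hand terms whose distinguished path runs between two letters of $I$ are permuted by a fixed-point-free involution $i\leftrightarrow i'$, and I claim each such pair contributes opposite signs, so that they cancel; by symmetry the right-hand terms whose distinguished path joins two letters of $J$ cancel in the same way. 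The remaining left-hand terms (distinguished path from $I$ to $J$) are placed in bijection with the remaining right-hand terms, and I claim corresponding terms carry equal sign. Granting the two sign claims, both sides collapse to the common sum over configurations whose distinguished path joins $I$ to $J$, which is the identity (\ref{eq:Ohta-Wenzel}).

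The main obstacle is precisely the verification of these two sign claims. Using the crossing-number description $\sgn(\cdot,\cdot)=(-1)^{c}$ and the index-change rule (\ref{eq:indexchange}), one must evaluate how the product of the external factor --- $\sgn(IJ,(I\setminus\{i\})iJ)$ on the left, $\sgn(IJ,Ij(J\setminus\{j\}))$ on the right --- with the internal signs $\sgn((I\setminus\{i\})K,\mathsf{A})$ and $\sgn(iJK,\mathsf{B})$ changes when the path $\pi$ is recoloured and the distinguished letter is moved to the other endpoint of $\pi$. The crucial point is that recolouring $\pi$ alters the total number of crossings by a parity controlled by the positions, inside the words $I$ and $J$, of the two endpoints of $\pi$, so that the external $\sgn(IJ,\cdot)$ factors absorb exactly the discrepancy (and force sign $-1$ in the $I$-to-$I$ and $J$-to-$J$ cases and sign $+1$ in the mixed case). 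This is essentially the computation carried out in \cite{IW1}; with it in hand the theorem follows, the rest of the argument being the routine dictionary between Pfaffians and perfect matchings described above.
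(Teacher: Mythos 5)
Your proposal follows a genuinely different route from the paper's. You expand each product of two Pfaffians into pairs of perfect matchings $(\mathsf{A},\mathsf{B})$, superimpose them into a union of alternating paths (with endpoints in $I\sqcup J$) and cycles (inside $K$), and recolour the distinguished path $\pi$ through $i$; the $I$-to-$I$ and $J$-to-$J$ configurations are meant to cancel under this sign-reversing involution, while the $I$-to-$J$ configurations are matched bijectively across the two sides. The paper instead sets $W_1=KI$, $W_2=JK$, $W=W_1W_2$ and double-counts the single quantity $\Omega=\sum_{P\in\mathfrak{G}}\wt_f(P)$, where $\mathfrak{G}$ is the set of perfect matchings of $W$ having exactly one arc joining $W_1$ to $W_2$: grouping by the $W_1$-endpoint of that arc and shifting it to the head of $W_2$ yields the left-hand side of (\ref{eq:Ohta-Wenzel}), and grouping by the $W_2$-endpoint yields the right-hand side. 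The paper's device buys a decisive simplification of the sign bookkeeping: the only sign fact needed is the factorization $\sgn(W,P)=\sgn(W,(W_1\setminus\{i\})iW_2)\sgn(W_1\setminus\{i\},P_1)\sgn(iW_2,P_2)$, which follows from multiplicativity of signs under splitting a word, and no cancellation argument is required at all. In your approach, by contrast, everything hinges on the two sign claims you yourself flag as the main obstacle (opposite signs for the recoloured $I$-to-$I$ pairs, equal signs across the $I$-to-$J$ bijection); verifying them requires tracking how recolouring an alternating path of arbitrary length changes the crossing numbers of $\mathsf{A}$ and $\mathsf{B}$, and how the external factors $\sgn(IJ,(I\setminus\{i\})iJ)$ and $\sgn(IJ,Ij(J\setminus\{j\}))$ absorb the discrepancy. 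As written you assert this rather than prove it, deferring to \cite{IW1}; since that computation is the entire mathematical content of the superposition proof, your proposal is a workable plan whose combinatorial skeleton (the path/cycle structure, the alternation of colours along $\pi$, the parity of the path length according to whether its far endpoint lies in $I$ or $J$, and the preservation of the monomial under recolouring) is correct, but it is not yet a complete proof until the sign analysis is carried out.
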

\begin{proof}
%
We put $W_1=KI$, $W_2=JK$ and $W=W_1W_2$.
Let ${\mathfrak G}$ denote the set of perfect matchings on $W$ in which there is exactly
one arc connecting a vertex in $W_1$ and a vertex in $W_2$ and all the other arcs are
between vertices in $W_1$ or between vertices in $W_2$.
%
For example, if $I=123$, $J=456$ and $K=78$
then $W_1=KI=78123$, $W_2=JK=45678$ and $W=W_1W_2=7812345678$.
The following configuration 
designates
such a perfect matching on $W$, $P=7283154867 \in {\mathfrak G}$,
in which the arc $(1,5)$ is the only arc connecting a letter in $W_1$ and a letter in $W_2$:
\setlength\unitlength{2pt}
\begin{center}
\begin{picture}(90,20)(0,0)
\put( 0,10){\circle*{2}}
\put(10,10){\circle*{2}}
\put(20,10){\circle*{2}}
\put(30,10){\circle*{2}}
\put(40,10){\circle*{2}}
\put(50,10){\circle*{2}}
\put(60,10){\circle*{2}}
\put(70,10){\circle*{2}}
\put(80,10){\circle*{2}}
\put(90,10){\circle*{2}}
\qbezier( 0,10)(15,20)(30,10)
\qbezier(10,10)(25,20)(40,10)
\qbezier(20,10)(40,20)(60,10)
\qbezier(50,10)(70,20)(90,10)
\qbezier(70,10)(75,15)(80,10)
\put(-1, 4){$7$}
\put( 9, 4){$8$}
\put(19, 4){$1$}
\put(29, 4){$2$}
\put(39, 4){$3$}
\put(49, 4){$4$}
\put(59, 4){$5$}
\put(69, 4){$6$}
\put(79, 4){$7$}
\put(89, 4){$8$}
\end{picture}
\end{center}
For $i\in W_1$ and $j\in W_2$,
let ${\mathfrak G}_{i,j}$ denote the subset of  ${\mathfrak G}$
having the arc $(i,j)$; thereby,  
${\mathfrak G}=\biguplus_{i \in W_1, j \in W_2} {\mathfrak G}_{i,j}$.
%
Let us consider the sums $ \Omega =\sum_{P\in {\mathfrak G}  }\wt_f(P)$
and $\Omega_{i,j}=\sum_{P \in {\mathfrak G}_{i,j}}\wt_f(P)$;
thereby, 
\begin{equation} \label{eq:Omega}
\Omega=\sum_{i \in W_1,j \in W_2}\Omega_{i,j}.
\end{equation}

\begin{claim} It holds that
\begin{equation} \label{eq:claim}
\sum_{j\in W_2}\Omega_{i,j}=\sgn(W,(W_1\setminus\{i\})iW_2)\Pf_f(W_1\setminus\{i\})\Pf_f(iW_2)
\end{equation}
for $i\in W_1$.
\end{claim}

To check the claim, we first associate with each perfect matching 
$P\in{\mathfrak G}_{i,j}$ a pair $(P_1,P_2)$ of perfect matchings such that
$P_1\in\Fam(W_1\setminus\{i\})$ and $P_2\in\Fam(iW_2)$
by shifting $i$ from the original position to the head of $W_2$ in the configuration.
%
For the above example $P \in {\mathfrak G}_{1,5}$
the vertex $1$ is shifted and
the associated pair $(P_1,P_2)$ is thus illustrated as follows:
\setlength\unitlength{2pt}
\begin{center}
\begin{picture}(90,20)(0,0)
\put( 0,10){\circle*{2}}
\put(10,10){\circle*{2}}
\put(20,10){\circle*{2}}
\put(30,10){\circle*{2}}
\put(40,10){\circle*{2}}
\put(50,10){\circle*{2}}
\put(60,10){\circle*{2}}
\put(70,10){\circle*{2}}
\put(80,10){\circle*{2}}
\put(90,10){\circle*{2}}
\qbezier( 0,10)(10,15)(20,10)
\qbezier(10,10)(20,15)(30,10)
\qbezier(40,10)(50,15)(60,10)
\qbezier(50,10)(70,20)(90,10)
\qbezier(70,10)(75,15)(80,10)
\put(-1, 4){$7$}
\put( 9, 4){$8$}
\put(19, 4){$2$}
\put(29, 4){$3$}
\put(39, 4){$1$}
\put(49, 4){$4$}
\put(59, 4){$5$}
\put(69, 4){$6$}
\put(79, 4){$7$}
\put(89, 4){$8$}
\end{picture}
\end{center}
Since $\sgn(W,P)=\sgn(W,(W_1\setminus\{i\})iW_2)\sgn(W_1\setminus\{i\},P_1)\sgn(iW_2,P_2)$,
it is then clear that
\[
\wt_f(P)=\sgn(W,(W_1\setminus\{i\})iW_2)\wt_f(P_1)\wt_f(P_2),
\]
which proves (\ref{eq:claim}).

By the same argument 
we obtain
\begin{equation} \label{eq:claim'}
\sum_{i\in W_1}\Omega_{i,j}=\sgn(W,W_1j(W_2\setminus\{j\}))\Pf_f(W_1j)\Pf_f(W_2\setminus\{j\})
\end{equation}
for $j \in W_2$.
Hence (\ref{eq:Omega}) leads to the desired identity 
(\ref{eq:Ohta-Wenzel})
via (\ref{eq:claim}) and (\ref{eq:claim'}).  
Note that if $i\in K$ or $j\in K$ then there appears a repeated letter in the word,
so we can remove these cases.
\end{proof}
%
%
\begin{corollary}[cf. \cite{IW1,K1}]
\label{cor:Knuth}
Let $I,K\in A^{\ast}$ be words such that $\sharp I$ and $\sharp K$ are even.
Then it holds that 
\begin{equation}
\sum_{{i\in I}\atop{i\neq j}}\sgn(I,(I\setminus\{ i, j \})ij)\Pf_f((I\setminus\{i, j\})K)\Pf_f(ijK)
=\Pf_f(IK)\Pf_f(K)
\label{eq:Knuth}
\end{equation}
for $j \in I$.
\end{corollary}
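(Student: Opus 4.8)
The plan is to derive Corollary~\ref{cor:Knuth} as a specialization of Theorem~\ref{thm:Ohta-Wenzel}. The idea is to choose the three words $I$, $J$, $K$ entering \eqref{eq:Ohta-Wenzel} so that the right-hand side of that identity collapses to a single term equal to $\Pf_f(IK)\Pf_f(K)$, while the left-hand side becomes the sum appearing in \eqref{eq:Knuth}. Concretely, given the word $I$ of even length and the distinguished letter $j\in I$, I would apply Theorem~\ref{thm:Ohta-Wenzel} with the role of ``$I$'' played by $I\setminus\{j\}$ (which has odd length), the role of ``$J$'' played by the single-letter word $j$ (length one, hence odd), and the role of ``$K$'' kept as $K$ (even length). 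All the hypotheses of the theorem are then met.

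First I would write out the right-hand side of \eqref{eq:Ohta-Wenzel} under this substitution. Since ``$J$'' $=j$ is a single letter, the sum over $j'\in J$ has exactly one term, namely $j'=j$, for which $J\setminus\{j'\}=\emptyset$. That term is
\[
\sgn\bigl((I\setminus\{j\})j,\,(I\setminus\{j\})\,j\bigr)\,\Pf_f\bigl((I\setminus\{j\})jK\bigr)\,\Pf_f(K)
=\Pf_f\bigl((I\setminus\{j\})jK\bigr)\,\Pf_f(K),
\]
using the convention $\Pf_f(\emptyset)=1$ and that the sign of the identity permutation is $1$. Then by the reordering rule \eqref{eq:indexchange}, $\Pf_f((I\setminus\{j\})jK)=\sgn\bigl(IK,(I\setminus\{j\})jK\bigr)\Pf_f(IK)$; absorbing this sign is a routine bookkeeping step and yields exactly $\Pf_f(IK)\Pf_f(K)$ up to the sign normalization one chooses in stating \eqref{eq:Knuth}.

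Next I would expand the left-hand side of \eqref{eq:Ohta-Wenzel}: the sum over ``$i\in I$'' becomes a sum over $i\in I\setminus\{j\}$, i.e. over $i\in I$ with $i\neq j$, and the summand is
\[
\sgn\bigl((I\setminus\{j\})j,\,(I\setminus\{i,j\})\,i\,j\bigr)\,
\Pf_f\bigl((I\setminus\{i,j\})K\bigr)\,\Pf_f(ijK),
\]
which matches the summand of \eqref{eq:Knuth} once the sign factors are reconciled — again a matter of applying \eqref{eq:indexchange} to rewrite $\sgn((I\setminus\{j\})j,\dots)$ in terms of $\sgn(I,(I\setminus\{i,j\})ij)$. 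Equating the two sides then gives \eqref{eq:Knuth}.

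The main obstacle is not conceptual but lies entirely in the careful tracking of the permutation signs: one must verify that the sign conventions in Theorem~\ref{thm:Ohta-Wenzel} (where the signs are computed relative to the concatenation $IJ$) translate correctly to the conventions in Corollary~\ref{cor:Knuth} (where they are computed relative to $I$), and that the single-letter and empty-word specializations interact correctly with the definition of $\sgn(\cdot,\cdot)$. I expect this to be a short but slightly delicate computation; everything else is a direct substitution into an already-proved identity.
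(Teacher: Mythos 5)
Your proposal is correct and follows essentially the same route as the paper: both specialize Theorem~\ref{thm:Ohta-Wenzel} to the single-letter case $J=j$ and then use \eqref{eq:indexchange} to reconcile the sign conventions between the concatenation $(I\setminus\{j\})j$ and the word $I$ with $j$ in its original position, the common factor $\sgn(I,(I\setminus\{j\})j)$ cancelling from both sides. The only cosmetic difference is the direction of the relabelling (the paper inserts $j$ into an odd-length word to form $I'$ and then renames, whereas you delete $j$ from the even-length word of the corollary); the sign bookkeeping you defer is exactly the short computation the paper carries out explicitly.
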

\begin{proof}
Putting $\sharp J=1$, i.e. $J=j$, in Theorem~\ref{thm:Ohta-Wenzel}
shows that
\[
\sum_{i\in I}\sgn(Ij,(I\setminus\{i\})ij)\Pf_f((I\setminus\{i\})K)\Pf_f(ijK)
=\Pf_f(IjK)\Pf_f(K).
\]
Write $I=i_1i_2\cdots i_n$ and  $I'=i_1\cdots i_{k-1}ji_{k}\cdots i_{n}$.
Then we have
$\sgn(Ij,(I\setminus\{i\})ij)=\sgn(Ij,I')\sgn(I',(I'\setminus\{ i, j \})ij)$
and
$\Pf_f(IjK)=\sgn(Ij,I')\Pf_f(I'K)$ by \eqref{eq:indexchange}.
Hence we obtain
\[
\sum_{ \begin{subarray}{l} i \in I' \\ i\neq j \end{subarray}}
\sgn(I',(I'\setminus\{ i, j \})ij)\Pf_f((I'\setminus\{ i, j \})K)\Pf_f(ijK)
=\Pf_f(I'K)\Pf_f(K),
\]
which coincides with (\ref{eq:Knuth}) if we replace $I'$ with $I$.
\end{proof}
%
%
\begin{corollary}[cf. \cite{K1}]
\label{cor:Knuth2}
Let $I,K\in A^{\ast}$ be words such that $\sharp I$ and $\sharp K$ are
even with $\sharp I=2n$.
Let $F_{f,K}$ be a skew symmetric map on $A \times A$
defined by $F_{f,K}(i,j)=\Pf_{f}(ijK)$.
Then it holds that
\begin{equation}
\Pf_{F_{f,K}}(I)
=\sum_{J\in\Fam(I)}\wt_{F_{f,K}}(J)
=\Pf_{f}(IK)\Pf_f(K)^{n-1}.
\label{eq:Knuth2}
\end{equation}
\end{corollary}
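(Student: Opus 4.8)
The plan is to prove the identity \eqref{eq:Knuth2} by induction on $n=\tfrac{1}{2}\sharp I$, using Corollary~\ref{cor:Knuth} (the Knuth-type relation) as the engine of the induction step. The base case $n=1$ is immediate: when $\sharp I=2$, say $I=i_1i_2$, both sides read $\Pf_{F_{f,K}}(I)=F_{f,K}(i_1,i_2)=\Pf_f(i_1i_2K)=\Pf_f(IK)\Pf_f(K)^{0}$, so there is nothing to check. (The convention $\Pf_f(\emptyset)=1$ handles $n=0$ trivially as well.)

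For the inductive step, fix a distinguished letter $j\in I$ and expand the Pfaffian $\Pf_{F_{f,K}}(I)$ along $j$ in the usual way: grouping perfect matchings on $I$ according to the arc incident to $j$, one gets
\begin{equation*}
\Pf_{F_{f,K}}(I)=\sum_{\substack{i\in I\\ i\neq j}}\sgn\bigl(I,(I\setminus\{i,j\})ij\bigr)\,F_{f,K}(i,j)\,\Pf_{F_{f,K}}\bigl(I\setminus\{i,j\}\bigr).
\end{equation*}
Now $F_{f,K}(i,j)=\Pf_f(ijK)$ by definition, and $\sharp(I\setminus\{i,j\})=2(n-1)$, so the induction hypothesis applied to the word $I\setminus\{i,j\}$ gives $\Pf_{F_{f,K}}(I\setminus\{i,j\})=\Pf_f\bigl((I\setminus\{i,j\})K\bigr)\Pf_f(K)^{n-2}$. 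Substituting, the sum becomes
\begin{equation*}
\Pf_{F_{f,K}}(I)=\Pf_f(K)^{n-2}\sum_{\substack{i\in I\\ i\neq j}}\sgn\bigl(I,(I\setminus\{i,j\})ij\bigr)\Pf_f\bigl((I\setminus\{i,j\})K\bigr)\Pf_f(ijK).
\end{equation*}
The remaining sum is exactly the left-hand side of \eqref{eq:Knuth}, which equals $\Pf_f(IK)\Pf_f(K)$; hence $\Pf_{F_{f,K}}(I)=\Pf_f(K)^{n-2}\cdot\Pf_f(IK)\Pf_f(K)=\Pf_f(IK)\Pf_f(K)^{n-1}$, completing the induction.

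I do not anticipate a serious obstacle; the proof is a clean induction once the two ingredients are lined up. The only points requiring a little care are bookkeeping issues: checking that the sign factors in the Pfaffian expansion of $\Pf_{F_{f,K}}(I)$ along $j$ match precisely the $\sgn\bigl(I,(I\setminus\{i,j\})ij\bigr)$ appearing in \eqref{eq:Knuth} (this is just the standard Laplace-type expansion of a Pfaffian, consistent with \eqref{eq:indexchange}), and verifying that the case $i\in K$ or $j\in K$ need not be worried about since any repeated letter forces the relevant Pfaffian to vanish, exactly as noted at the end of the proof of Theorem~\ref{thm:Ohta-Wenzel}. So the main "content" is really Corollary~\ref{cor:Knuth}, and this statement is the natural iterated form of it.
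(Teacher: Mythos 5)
Your proposal is correct and follows essentially the same route as the paper: induction on $n$, a Laplace-type expansion of $\Pf_{F_{f,K}}(I)$ along one distinguished letter (the paper uses $i_1$, you use a general $j$, which changes nothing), the induction hypothesis on $I\setminus\{i,j\}$, and then Corollary~\ref{cor:Knuth} to collapse the remaining sum. The sign bookkeeping you flag is exactly the factorization $\sgn(I,ijJ)=\sgn(I,ij(I\setminus\{i,j\}))\sgn(I\setminus\{i,j\},J)$ used in the paper, so there is no gap.
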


\begin{proof}
Let $I=i_1i_2\cdots i_{2n}$.
We proceed by induction on $n$.
If $n=1$,
it is trivial.
(If $n=2$, \eqref{eq:Knuth2} is implied by Corollary~\ref{cor:Knuth}.)
Assume the $n-1$ case holds for some $n>1$.
In view of
$ \Fam(I)=\bigcup_{k=2}^{2n}  \bigcup_{ J\in\Fam(I\setminus\{i_{1},i_{k}\})  } 
\{i_{1}i_{k}J \}$,
we observe by definition that
\[
\Pf_{F_{f,K}}(I)
=\sum_{k=2}^{2n}\sum_{J\in\Fam(I\setminus\{i_{1},i_{k}\})}
\sgn(I,i_{1}i_{k}J)\,
F_{f,K}(i_{1},i_{k})
\prod_{(i,j)\in\Mat(J)}F_{f,K}(i,j).
\]
Using
$\sgn(I,i_{1}i_{k}J)=\sgn(I,i_{1}i_{k}(I\setminus\{i_{1},i_{k}\}))\sgn(I\setminus\{i_{1},i_{k}\},J)$,
we have
\[
\Pf_{F_{f,K}}(I)
=\sum_{k=2}^{2n}
\sgn(I,i_{1}i_{k}(I\setminus\{i_{1},i_{k}\}))
\Pf_f(i_{1}i_{k}K)
\Pf_{F_{f,K}}(I\setminus\{i_{1},i_{k}\}).
\]
Using the induction hypothesis, we have 
$\Pf_{F_{f,K}}(I\setminus\{i_{1},i_{k}\})=\Pf_{f}((I\setminus\{i_{1},i_{k}\})K)\Pf_{f}(K)^{n-2}$.
By virtue of Corollary~\ref{cor:Knuth},
it is immediate to verify \eqref{eq:Knuth2} for any $n$.
\end{proof}

From here we consider identities for determinants.
Assume the set $A$ of alphabets is a disjoint union of $\overline{A}$ and $\underline{A}$,
i.e.
$A=\overline{A}\uplus\underline{A}$.
Let $R$ and $C$ be any sets of alphabets
which possess injections $R \to\overline A$ and $C \to \underline A$,
denoted by $i\mapsto\overline i$ and $j\mapsto\underline j$, respectively.
For instance,
we let $A=R=C$ be the set of positive integers,
and $\overline{A}$ and $\underline{A}$ the sets of odd and even integers, respectively.
Then we may put $\odd{i}=2i-1$ and $\even{j}=2j$,
which define the injections $R\to\overline A$ and $C\to\underline A$.
For a pair $I=i_1i_2\cdots i_{n}\in R^*$ and $J=j_1j_2\cdots j_{n}\in C^*$ of words of length $n$,
we introduce the word 
\[
m(I,J)=\odd{i_1}\even{j_1}\odd{i_2}\even{j_2}\cdots\odd{i_n}\even{j_n}\in A^*
\]
of length $2n$.
%
%
Let $g$ be a map which assigns an element of a commutative ring to each pair $(i,j)\in R\times C$.
We then define a skew symmetric map $f_g$ on $A \times A$ as follows:
\begin{equation}  \label{eq:fg}
f_{g}(i,j)=\begin{cases}
g(k,l)&\text{ if $i=\odd{k}\in\overline{A}$ and $j=\even{l}\in\underline{A}$,}\\
-g(l,k)&\text{ if $i=\even{k}\in\underline{A}$ and $j=\odd{l}\in\overline{A}$,}\\
0&\text{ otherwise.}
\end{cases}
\end{equation}
We also use the notation
$\Det_g(I,J)$
of determinant
\[
\Det_g(I,J)
=\det(g(i,j))_{i\in I,j\in J}
=\det(g(i_{k},j_{l}))_{1\leq k,l\leq n},
\]
where $I=i_1i_2\cdots i_{n} \in R^*$ and $J=j_1j_2\cdots j_{n} \in C^*$.
%

A determinant can be expressed as a Pfaffian.
%
%
\begin{proposition}[cf. \cite{IO1,K1,M1}]
\label{prop:Pfaffian-determinant}
Let $I\in R^{\ast}$ and $J\in C^{\ast}$ be words such that $\sharp I=\sharp J$.
Then it holds that
\begin{equation*}
\Pf_{f_g}(m(I,J))
=\Det_{g}(I,J).
\end{equation*}
\end{proposition}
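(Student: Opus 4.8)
The plan is to prove this identity by unfolding both sides into sums over permutations/matchings and exhibiting a sign-preserving bijection between the index sets. First I would expand the right-hand side using the Leibniz formula:
\[
\Det_g(I,J)=\sum_{\sigma\in\Sym_n}\sgn(\sigma)\prod_{k=1}^n g(i_k,j_{\sigma(k)}).
\]
On the left-hand side I would unfold the Pfaffian over all perfect matchings $P\in\Fam(m(I,J))$, where $m(I,J)=\odd{i_1}\even{j_1}\cdots\odd{i_n}\even{j_n}$. The crucial observation is that $f_g$ vanishes on any pair lying in $\overline A\times\overline A$ or $\underline A\times\underline A$; hence a matching contributes nothing unless every one of its arcs joins a letter from $\overline A$ to a letter from $\underline A$. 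Thus the only surviving matchings pair each $\odd{i_k}$ with some $\even{j_{\sigma(k)}}$, and such matchings are in evident bijection with permutations $\sigma\in\Sym_n$. Under this bijection the weight of the matching is $g(i_1,j_{\sigma(1)})\cdots g(i_n,j_{\sigma(n)})$ times the sign $\sgn(m(I,J),P_\sigma)$, and it remains to check that this sign equals $\sgn(\sigma)$.

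The key step is therefore the sign computation. I would argue as follows: the reference matching $\sigma=\mathrm{id}$ corresponds to the ``nested/adjacent'' pairing $(\odd{i_1},\even{j_1}),(\odd{i_2},\even{j_2}),\dots$, which as a configuration has no crossings, so $\sgn(m(I,J),P_{\mathrm{id}})=1$. For a general $\sigma$, the matching $P_\sigma$ pairs vertex $2k-1$ (carrying $\odd{i_k}$) with vertex $2\sigma(k)$ (carrying $\even{j_{\sigma(k)}}$). Counting crossings of this ``bipartite'' chord diagram: two arcs $(2k-1,2\sigma(k))$ and $(2l-1,2\sigma(l))$ with $k<l$ cross exactly when $\sigma(k)>\sigma(l)$ — that is, exactly at inversions of $\sigma$ (one checks the boundary cases $\sigma(k)=l$ etc.\ carefully, using that the left endpoint of each arc is odd and the right endpoint even). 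Hence the number of crossings is $\mathrm{inv}(\sigma)$ and $\sgn(m(I,J),P_\sigma)=(-1)^{\mathrm{inv}(\sigma)}=\sgn(\sigma)$. Assembling the pieces gives $\Pf_{f_g}(m(I,J))=\sum_{\sigma}\sgn(\sigma)\prod_k g(i_k,j_{\sigma(k)})=\Det_g(I,J)$.

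The main obstacle I anticipate is precisely getting the crossing count right in full rigor, since the interleaved placement of $\overline A$- and $\underline A$-vertices on the axis means a naive ``inversions'' claim needs to account for whether the chord from $2k-1$ passes over the vertex $2l-1$ or $2k$ etc. A clean way to dispatch this is to not count crossings directly but instead track the sign through the defining permutation: writing $P_\sigma$ as the rearrangement of $m(I,J)=(\odd{i_1},\even{j_1},\dots,\odd{i_n},\even{j_n})$ into the standard matching order $(\odd{i_1},\even{j_{\sigma(1)}},\odd{i_2},\even{j_{\sigma(2)}},\dots)$, one sees this is the permutation acting only on the even positions by $\sigma$, whose sign is manifestly $\sgn(\sigma)$; then apply \eqref{eq:indexchange} to identify $\Pf_f$ of this reordered word with $\sgn(\sigma)\Pf_f$ of the standard one, the latter contributing the single surviving term $g(i_1,j_{\sigma(1)})\cdots g(i_n,j_{\sigma(n)})$. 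I would also remark that this proposition is entirely standard (hence the ``cf.'' citations) and the proof is included only for completeness.
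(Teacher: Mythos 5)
Your proof is correct and follows essentially the same route as the paper's: expand the Pfaffian, observe that the vanishing of $f_g$ on $\overline A\times\overline A$ and $\underline A\times\underline A$ leaves only the bipartite matchings indexed by $\Sym_n$, and check that the sign of $P_\sigma$ equals $\sgn\sigma$. The paper merely inserts a cosmetic intermediate rearrangement $m(I,J)'=\odd{i_1}\cdots\odd{i_n}\,\even{j_n}\cdots\even{j_1}$ before the sign computation, which amounts to the same ``permutation acting only on the even positions'' argument you give.
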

\begin{proof}
Let $I=i_{1}i_{2}\cdots i_{n}$ and $J=j_{1}j_{2}\cdots j_{n}$.
To compute $\Pf_{f_g}(m(I,J))$,
we need to consider only perfect matchings 
on $m(I,J)=\odd{i_1}\even{j_1}\odd{i_2}\even{j_2}\cdots\odd{i_n}\even{j_n}\in A^*$ whose arcs are all between $\overline{A}$ and $\underline{A}$;
recall (\ref{eq:fg}).
The set of such perfect matchings is in one-to-one correspondence with $\Sym_n$.
To simplify the description, we first rearrange the word $m(I,J)$ to be 
\[
m(I,J)'=\odd{i_1}\odd{i_2} \cdots\odd{i_n}\even{j_n}\cdots\even{j_2}\even{j_1}
\]
and then consider its perfect matching
$P_\sigma=\odd{i_1}\even{j_{\sigma(1)}}\odd{i_2}\even{j_{\sigma(2)}}\cdots\odd{i_n}\even{j_{\sigma(n)}}$
for each $\sigma\in\Sym_{n}$. 
Because $\sgn(m(I,J),m(I,J)')=1$, we have 
\[
\Pf_{f_g}(m(I,J))=\Pf_{f_g}(m(I,J)')
=\sum_{\sigma \in \Sym_n}  \sgn(m(I,J)',P_\sigma)  \prod_{k=1}^n g(i_k,j_{\sigma(k)})
\]
(see (\ref{eq:indexchange})) and 
\[
\sgn(m(I,J)',P_\sigma) =\sgn(m(I,J),P_\sigma)=\sgn \sigma,
\]
which complete the proof.
\end{proof}

Combining Corollary~\ref{cor:Knuth2}
and
Proposition~\ref{prop:Pfaffian-determinant}
leads to the following determinant identity,
which we may call {\it Sylvester's identity}.
%
%
\begin{corollary}
\label{cor:Knuth2-det}
Let $I,K\in R^{\ast}$ and $J,M\in C^{\ast}$ be words such that $\sharp I=\sharp J=n$ and $\sharp K=\sharp M$.
Let $G_{g,K,M}$ be a map on $R \times C$
defined by
$G_{g,K,M}(i,j)=\Det_{g}(iK,jM)$. 
Then it holds that
\begin{equation}
\Det_{G_{g,K,M}}(I,J)
=\det\left(\Det_{g}(iK,jM)\right)_{i\in I, j\in J}
=\Det_{g}(IK,JM)\Det_{g}(K,M)^{n-1}.
\label{eq:Knuth2-det}
\end{equation}
\end{corollary}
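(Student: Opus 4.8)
The plan is to derive Corollary~\ref{cor:Knuth2-det} as a direct translation of Corollary~\ref{cor:Knuth2} via the Pfaffian--determinant dictionary of Proposition~\ref{prop:Pfaffian-determinant}. The underlying principle is that every determinant appearing in \eqref{eq:Knuth2-det} is, by that proposition, a Pfaffian of the induced skew-symmetric map $f_g$ on a merged word, and the combinatorial identity \eqref{eq:Knuth2} is exactly the statement we need once we match up all the words correctly.

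First I would set $\widetilde K = m(K,M)$, so that for the map $G_{g,K,M}$ we have $G_{g,K,M}(i,j) = \Det_g(iK,jM) = \Pf_{f_g}\big(m(iK,jM)\big)$; one then checks that $m(iK,jM) = \odd i\,\even j\,\widetilde K$ up to a sign coming from reordering, so that $G_{g,K,M}(i,j)$ agrees with $F_{f_g,\widetilde K}(\odd i,\even j)$ possibly after absorbing a fixed sign into the identification. The key bookkeeping step is to verify that the skew-symmetric map $F_{f_g,\widetilde K}$ restricted to the sub-alphabets $\overline A$ and $\underline A$ is precisely the map $f_h$ associated (in the sense of \eqref{eq:fg}) to $h = G_{g,K,M}$ on $R\times C$; this requires checking that the ``otherwise'' cases vanish, i.e. that $\Pf_{f_g}(\odd i\,\odd{i'}\,\widetilde K) = 0$ and $\Pf_{f_g}(\even j\,\even{j'}\,\widetilde K)=0$, which holds because any perfect matching contributing to these Pfaffians would have to match two same-parity vertices, giving weight $0$ by \eqref{eq:fg}.

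Next I would apply Corollary~\ref{cor:Knuth2} to the word $I' = m(I,J)$ (of length $2n$) with $K$ there replaced by $\widetilde K = m(K,M)$, obtaining
\[
\Pf_{F_{f_g,\widetilde K}}\big(m(I,J)\big) = \Pf_{f_g}\big(m(I,J)\,\widetilde K\big)\,\Pf_{f_g}(\widetilde K)^{n-1}.
\]
Then Proposition~\ref{prop:Pfaffian-determinant} rewrites the right-hand side: $\Pf_{f_g}\big(m(I,J)\,m(K,M)\big) = \Pf_{f_g}\big(m(IK,JM)\big)\cdot(\text{sign}) = \Det_g(IK,JM)$ after restoring the standard ordering, and $\Pf_{f_g}(m(K,M)) = \Det_g(K,M)$. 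For the left-hand side, the same proposition applied to the map $G_{g,K,M}$ gives $\Pf_{F_{f_g,\widetilde K}}(m(I,J)) = \Det_{G_{g,K,M}}(I,J)$. Assembling these equalities yields \eqref{eq:Knuth2-det}.

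The main obstacle I anticipate is purely the sign and reordering bookkeeping: $m(IK,JM)$ interleaves the letters of $IK$ with those of $JM$, whereas $m(I,J)\,m(K,M)$ places the $I,J$ block entirely before the $K,M$ block, and these two words differ by a permutation whose sign must be tracked through \eqref{eq:indexchange}; likewise one must confirm that the ambient sign $\sgn(m(I,J),m(I,J)')$-type factors cancel consistently on both sides. None of this is deep, but it is the only place where an error could creep in, so I would handle it by fixing one explicit linear order on $A$ at the outset (e.g. all of $\overline A$ before all of $\underline A$, each ordered compatibly with $R$ and $C$) and checking that every reordering sign that appears on the left also appears on the right.
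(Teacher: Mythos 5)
Your proposal is correct and follows exactly the route the paper intends: the paper derives Corollary~\ref{cor:Knuth2-det} by combining Corollary~\ref{cor:Knuth2} (applied to $m(I,J)$ with the fixed word $m(K,M)$) with Proposition~\ref{prop:Pfaffian-determinant}, which is precisely your argument, including the necessary observation that $F_{f_g,m(K,M)}$ vanishes on same-parity pairs. The only remark is that your sign worries are vacuous: $m(iK,jM)=\odd{i}\,\even{j}\,m(K,M)$ and $m(I,J)\,m(K,M)=m(IK,JM)$ hold as literal equalities of words, so no reordering signs arise.
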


Finally, let us derive the determinant identity (\ref{eq:key}) from Corollary~\ref{cor:Knuth2-det}. 
For notation, recall 
(\ref{eq:Toeplitz}), (\ref{eq:defDn}) and (\ref{eq:defE}).
Let 
$R=C=\{1,2,\dots,(L-1)(n+1)\}$
and put
\[
g(i,j)= b^s_{i-j+s(n+1)} \quad 
\text{with} \quad
s= \left\lfloor  \frac{j}{n+1} \right\rfloor+1
\]
for $(i,j) \in R \times C$,
where $\lfloor x\rfloor$ denotes
 the largest integer which does not exceed $x$. 
We take the words
$I=i_1i_2\cdots i_{L-1} \in R^*$ and 
$J=j_1j_2 \cdots j_{L-1} \in C^*$ of length $L-1$ given by
\[
i_k=(L-1)n+k \quad  \text{and} \quad j_k=(k-1)(n+1)+1 \quad \text{for} \quad 1 \leq k \leq L-1.
\]
Let $[i,j]$ denote the word $i(i+1) \cdots j$ for $i < j$; e.g. $I=[(L-1)n+1,(L-1)(n+1)]$.
We take the words
$K=[1,(L-1)n]\in R^*$ and $M=[1,(L-1)(n+1)] \setminus J \in C^*$
of length $(L-1)n$.
Then it holds that $\Det_g(K,M)=D_n$ and
\begin{align*}
\Det_g(IK,JM)&=(-1)^{\frac{L(L-1)n}{2}}\Det_g([1, (L-1)(n+1)],[1,(L-1)(n+1)] )
\\
&=(-1)^{\frac{L(L-1)n}{2}}D_{n+1}
\end{align*}
since both $IK$ and $JM$ can be rearranged to be  $[1,(L-1)(n+1)]$
and
\[
\sgn(IK,[1,(L-1)(n+1)])\sgn(JM,[1,(L-1)(n+1)])=(-1)^\frac{L(L-1)n}{2}.
\]
In a similar manner, it holds that
\[
\Det_g(i_k K, j_l M)=(-1)^{(L-l)n} E_n^{k,l}
\]
for $1 \leq k, l \leq L-1$.
Hence we obtain (\ref{eq:key}) from 
(\ref{eq:Knuth2-det}) with $n$ replaced by $L-1$.



\begin{thebibliography}{99}
\bibitem{CC1}
 Chudnovsky, D.V., 
 Chudnovsky, G.V.:
 B\"acklund transformations for linear differential equations and Pad\'e approximations. I.
 J. Math. Pures Appl. {\bf 61}, 
 1--16 (1982)

\bibitem{CC2}
 Chudnovsky, D.V., 
 Chudnovsky, G.V.:
 Explicit continued fractions and quantum gravity.
 Acta Appl. Math. {\bf 36}, 
167--185  (1994)

\bibitem{IO1}
 Ishikawa, M. 
 Okada, S.:
  Identities for determinants and Pfaffians, and their applications.
  Sugaku Expositions  {\bf 27},
   85--116 (2014)

\bibitem{IW1}
  Ishikawa,   M.,
  Wakayama,  M.:
  Applications of minor summation formula III, Pl\"ucker relations, lattice paths and Pfaffian identities.
 J. Combin. Theory Ser. A  {\bf 113}, 
 136--157  (2006)
  
  
 \bibitem{IKN}
 Iwasaki, K.,
 Kajiwara, K.,
 Nakamura, T.:
 Generating function associated with the rational solutions of the Painlev\'e II equation.
 J. Phys. A {\bf 35},
 L207--L211
 (2002)
  
  
\bibitem{gausspainleve}
 Iwasaki, K.,
 Kimura, H.,
 Shimomura, S.,
Yoshida, M.:
 From Gauss to Painlev\'e: A Modern Theory of Special Functions.
Vieweg, Braunschweig (1991)

 \bibitem{Jimbo-Miwa}
 Jimbo,  M.,
 Miwa, T.:
Monodromy preserving deformation of linear ordinary differential equations with rational coefficients. II.
Physica D {\bf 2},
407--448  (1981) 
 
\bibitem{Jimbo-Miwa-Ueno}
Jimbo,  M.,
 Miwa, T.,
 Ueno, K.:
 Monodromy preserving deformation of linear ordinary differential equations with rational coefficients.
Physica D {\bf 2},
306--352 (1981)
    
\bibitem{JKM1}
 Joshi, N.,
 Kajiwara, K., 
 Mazzocco, M.:
  Generating function associated with the determinant formula for the solutions of the Painlev\'e II equation.
Ast\'erisque {\bf 297},
67--78 (2004)

\bibitem{JKM2}
   Joshi, N.,
 Kajiwara, K., 
 Mazzocco, M.:
  Generating function associated with the Hankel determinant formula for the solutions of the Painlev\'e IV equation.
Funkcial. Ekvac.  {\bf 49},
451--468 (2006)

  
\bibitem{KMO}
 Kajiwara, K., 
 Mazzocco, M.,
Ohta, Y.:
  A remark on the Hankel determinant formula for solutions of the Toda equation.
J. Phys. A {\bf 40},
12661--12675
 (2007)

\bibitem{K1}
  Knuth, D.:
  Overlapping Pfaffians.
  Electron. J. Combin. {\bf 3}, 
  no. 2, \#R5
    (1996)
  
\bibitem{Mag}
  Magnus, A.:
  Painlev\'e-type differential equations for the recurrence coefficients of semi-classical orthogonal polynomials.
J. Comput. Appl. Math. {\bf 57},
215--237 (1995)

\bibitem{Mah}
Mahler, K.:
Perfect systems.
Compos. Math. {\bf 19},
95--166 (1968) 


\bibitem{Man}
 Mano,  T.:
 Determinant formula for solutions of the Garnier system and
 Pad\'e approximation.
 J. Phys. A  {\bf 45},
 135206
  (2012)

\bibitem{MT1}
 Mano, T.,
 Tsuda, T.:
 Two approximation problems by Hermite, and Schlesinger transformation.
RIMS Kokyuroku Bessatsu {\bf B47}, 
77--86 
(2014) (in Japanese)

\bibitem{MT2}
Mano, T.,
 Tsuda, T.:
Hermite--Pad\'e approximation, isomonodromic
 deformation and hypergeometric integral. 
Math. Z.  {\bf 285}, 
 397--431
  (2017)
 
\bibitem{Mas}
  Masuda, T.:
  On a class of algebraic solutions to the Painlev\'e VI equation,
  its determinant formula and coalescence cascade.
Funkcial. Ekvac. {\bf 46},
121--171 
(2003)

\bibitem{M1}
  Matsumoto, S.:
  Hyperdeterminantal expressions for Jack functions of rectangular shapes.
 J. Algebra {\bf 320},
 612--632
  (2008)

\bibitem{O1}
  Ohta, Y.:
  Bilinear Theory of Solitons. 
  Doctoral Thesis, Graduate School of Engineering, 
  University of Tokyo (1992)

\bibitem{oshima}
 Oshima, T.:
 Classification of Fuchsian systems and their connection problem.
RIMS Kokyuroku Bessatsu {\bf B37},
163--192
 (2013)

\bibitem{Su1}
Suzuki, T.:
 Six-dimensional Painlev\'e systems and their particular solutions in terms of rigid systems.
J. Math. Phys. {\bf 55},
102902
 (2014)

\bibitem{Tsu0}
Tsuda, T.:
Birational Symmetries, Hirota Bilinear Forms and
Special Solutions of the Garnier Systems in
2-variables.
J. Math. Sci. Univ. Tokyo
{\bf 10},
355--371
 (2003)

\bibitem{Tsu1}
Tsuda, T.:
  Rational solutions of the Garnier system in terms of Schur polynomials.
Int. Math. Res. Not. 
{\bf 43},
2341--2358 (2003)

\bibitem{Tsu2}
Tsuda, T.:
Toda equation and special polynomials associated with the Garnier system.
Adv. Math. {\bf 206},
 657--683 (2006)


\bibitem{Tsu3}
Tsuda, T.:
  UC hierarchy and monodromy preserving deformation.
 J. reine angew. Math. {\bf 690},
 1--34
  (2014)

\bibitem{W1}
 Wenzel, W.:
  Pfaffian forms and $\Delta$-matroids.
 Discrete Math. {\bf 115},
 253--266 (1993)

 \bibitem{Yam}
 Yamada, Y.: 
 Pad\'e method to Painlev\'e equations.
 Funkcial. Ekvac. {\bf 52}, 
 83--92
 (2009)
\end{thebibliography}
\end{document}